\documentclass[11pt,a4paper]{amsart}

\usepackage[T1]{fontenc}
\usepackage[utf8]{inputenc}
\usepackage{lmodern}
\usepackage{microtype}
\usepackage[a4paper,margin=1.15in]{geometry}
\usepackage{amsmath,amssymb,amsthm,mathtools}
\usepackage{enumitem}
\usepackage[dvipsnames]{xcolor}
\usepackage{aliascnt}
\usepackage{hyperref}
\usepackage[nameinlink,capitalise]{cleveref}

\hypersetup{
  colorlinks=true,
  linkcolor=blue!50!black,
  citecolor=blue!50!black,
  urlcolor=blue!50!black
}

\newtheorem{theorem}{Theorem}[section]

\newaliascnt{lemma}{theorem}
\newtheorem{lemma}[lemma]{Lemma}
\aliascntresetthe{lemma}

\newaliascnt{proposition}{theorem}
\newtheorem{proposition}[proposition]{Proposition}
\aliascntresetthe{proposition}

\newaliascnt{corollary}{theorem}
\newtheorem{corollary}[corollary]{Corollary}
\aliascntresetthe{corollary}

\theoremstyle{definition}
\newaliascnt{definition}{theorem}
\newtheorem{definition}[definition]{Definition}
\aliascntresetthe{definition}

\theoremstyle{remark}
\newaliascnt{remark}{theorem}
\newtheorem{remark}[remark]{Remark}
\aliascntresetthe{remark}

\crefname{theorem}{Theorem}{Theorems}
\Crefname{theorem}{Theorem}{Theorems}
\crefname{lemma}{Lemma}{Lemmas}
\Crefname{lemma}{Lemma}{Lemmas}
\crefname{proposition}{Proposition}{Propositions}
\Crefname{proposition}{Proposition}{Propositions}
\crefname{corollary}{Corollary}{Corollaries}
\Crefname{corollary}{Corollary}{Corollaries}
\crefname{definition}{Definition}{Definitions}
\Crefname{definition}{Definition}{Definitions}
\crefname{remark}{Remark}{Remarks}
\Crefname{remark}{Remark}{Remarks}

\DeclareMathOperator{\conv}{conv}
\DeclareMathOperator{\aff}{aff}
\DeclareMathOperator{\lin}{lin}

\newcommand{\R}{\mathbb{R}}

\newcommand{\VP}{\mathcal P}

\newcommand{\V}{\mathcal V}
\newcommand{\E}{\mathcal E}
\newcommand{\F}{\mathcal F}

\newcommand{\sn}{\mathbb S^2}

\title[The Symmetric Mahler inequality in Dimension Three]{The Symmetric Mahler Inequality in Dimension Three via Admissible Shadow Systems}

\author{Shibing Chen}
\address{School of Mathematical Sciences,
University of Science and Technology of China,
Hefei, Anhui 230026, China}
\email{chenshib@ustc.edu.cn}

\author{Yuanyuan Li}
\address{Institute for Theoretical Sciences,
Westlake University, Hangzhou, 310030, China}
\email{lyyuan@westlake.edu.cn}

\author{Dongmeng Xi}
\address{Department of Mathematics,
Shanghai University,
Shanghai 200444, China}
\email{xi\_dongmeng@shu.edu.cn; dongmeng.xi.math@gmail.com}

\author{Zhe-Feng Xu}
\address{School of Mathematical Sciences,
University of Science and Technology of China,
Hefei, Anhui 230026, China}
\email{xzf1998@mail.ustc.edu.cn}

\date{\today}

\begin{document}

\begin{abstract}
The three-dimensional symmetric Mahler inequality states that, for every
origin-symmetric convex body \(K=-K\subset \mathbb{R}^3\),
\[
    \VP(K)=    |K|\,|K^\circ|\geq \frac{32}{3}.
\]
It was recently proved by Iriyeh--Shibata \cite{IS2020}, and a shorter
proof was later given by
Fradelizi--Hubard--Meyer--Rold\'an-Pensado--Zvavitch \cite{FHMRZ}.
Both proofs combine ingenious equipartition arguments of algebraic-topological
origin with delicate geometric estimates inspired by Meyer's argument for unconditional bodies.

In this paper, we give a new proof of this inequality using a purely geometric approach, based on what we call symmetric admissible
shadow systems. This is a natural extension of the new techniques developed in our proof of the three-dimensional non-symmetric Mahler
conjecture \cite{CLXX-Mahler}. 
\end{abstract}

\maketitle

%\medskip
%\noindent\textbf{2020 Mathematics Subject Classification.} 52A20; 52A40; 52B11; 52B12.

\medskip

\section{Introduction}
The volume product is one of the basic affine invariants of a convex body.
For a convex body \(K\subset \R^n\) containing the origin in its interior, its
polar body is
\[
        K^\circ=\{y\in\R^n:x\cdot y\leq 1\text{ for all }x\in K\}.
\]
For an origin-symmetric convex body \(K=-K\), the symmetric Mahler conjecture
predicts
\begin{equation}
    \label{eq-mahlerns}
        \VP(K)=|K|\,|K^\circ| \geq \frac{4^n}{n!},
\end{equation}
where \(\VP(K)\) denotes the volume product. Equality is expected precisely
for Hanner polytopes, which include affine images of cubes and cross-polytopes
as special cases. Mahler proved the planar case in his 1938 paper \cite{Mahler}. In
dimension three, this becomes
\begin{equation}
    \label{eq-mahler3s}
       \VP(K) \geq  \frac{32}{3},
\end{equation}
for every origin-symmetric convex body \(K\subset\R^3\). Equality is attained
precisely by affine images of the cube and of the cross-polytope.

For general convex bodies, without assuming central symmetry, the Mahler
conjecture predicts
\begin{equation}\label{eq-mahler-nonsym1}
    |K|\,|K^z| \geq \frac{(n+1)^{n+1}}{(n!)^2}
\end{equation}
for every convex body \(K\subset \R^n\) and every
\(z\in \operatorname{int} K\), where
\[
        K^z=z+(K-z)^\circ
\]
denotes the polar body of \(K\) with center \(z\). Equivalently, writing
\[
        \VP(K)= |K|\,|K^{s(K)}|,
\]
where \(s(K)\) denotes the Santal\'o point of \(K\), namely the unique point
\(z\in \operatorname{int} K\) minimizing \(z\mapsto |K^z|\), the conjecture can
be written as
\begin{equation}\label{eq-mahler-nonsym2}
    \VP(K)= |K|\,|K^{s(K)}|
    \geq \frac{(n+1)^{n+1}}{(n!)^2}.
\end{equation}
The constant is expected to be attained by simplices. Since \(s(K)\) minimizes
\(z\mapsto |K^z|\), this formulation is equivalent to the apparently stronger
form \eqref{eq-mahler-nonsym1}. The planar case was also proved by
Mahler  in his 1938 paper   \cite{Mahler}.  

The three-dimensional symmetric Mahler conjecture \eqref{eq-mahler3s} was
proved by Iriyeh--Shibata \cite{IS2020}; a shorter proof, based on an
ingenious equipartition argument, was later given by
Fradelizi--Hubard--Meyer--Rold\'an-Pensado--Zvavitch \cite{FHMRZ}.
Earlier landmarks in the development of Mahler's conjecture include
\cite{SaintRaymond1981,Reisner1986,BourgainMilman1987, GMR1988,Kuperberg2008}.

Recently, the three-dimensional    Mahler conjecture for non-symmetric convex bodies
\eqref{eq-mahler-nonsym1} was proved by the present authors
\cite{CLXX-Mahler}, using the admissible shadow system method.

The shadow system method goes back to Rogers and Shephard \cite{RogersShephard1958}.  For the volume product, Campi and Gronchi \cite{CG} established a shadow-system theorem in the symmetric setting, and Meyer and Reisner \cite{MR} proved a shadow-system theorem for the Santal\'o volume product with a rigidity statement in the equality case.

In \cite{CLXX-Mahler}, the notions of admissible shadow systems and
admissible speeds were introduced, and several new tools were developed. The new admissible deformation allows us to pass from one-point motions to coordinated shadow movements of vertices that preserve the face lattice, and hence to treat general convex polytopes.  

In the present paper, we further develop these ideas in the centrally symmetric setting and apply them to the symmetric Mahler conjecture. 
In the symmetric version, the admissible shadow system method must be rebuilt inside the class of centrally symmetric convex bodies: admissible speeds are required to be symmetric and locally affine on the relevant faces, so that the deformation preserves central symmetry, controls the face lattice, and keeps the volume behavior rigid enough to force only trivial speeds at a minimizer.
Finally, via a symmetric admissible shadow system method, we give a new, purely geometric proof of the three dimensional symmetric inequality \eqref{eq-mahler3s}:

\begin{theorem}[New proof of the three-dimensional symmetric Mahler inequality]\label{thm:main}
Every origin-symmetric convex body $K=-K\subset\R^3$ satisfies
\[
        |K|\,|K^\circ|\geq \frac{32}{3}.
\]
An origin-symmetric polytope attains equality   if and only if it is the affine image of a cube or an octahedron.
\end{theorem}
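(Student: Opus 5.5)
The plan is to reduce to symmetric polytopes and minimize \(\VP\) over a class of bounded combinatorial complexity, then use symmetric admissible shadow systems to show that any minimizer must be a parallelepiped or an octahedron. By continuity of \(\VP\) in the Hausdorff metric and density of origin-symmetric polytopes, it suffices to prove \(\VP(P)\geq 32/3\) for origin-symmetric polytopes \(P\). Since \(\VP(P)=\VP(P^\circ)\) and both quantities are affine invariant, we may work modulo \(GL_3\). For fixed \(N\), consider the class \(\mathcal K_N\) of origin-symmetric polytopes with at most \(2N\) vertices. It is compact modulo \(GL_3\) after John-position normalization, so \(\VP\) attains a minimum on \(\mathcal K_N\) at some \(P\). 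Using duality, we may also assume, if convenient, that \(P\) is chosen to minimize the number of vertices among minimizers.

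The core step is a first- and second-variation analysis at \(P\). A symmetric admissible shadow system moves the vertices \(\pm v_i\) as \(\pm(v_i+t\alpha_i\theta)\) for a fixed direction \(\theta\). The speeds \(\alpha\) are odd, \(\alpha(-v)=-\alpha(v)\), and are affine on each facet, so that the face lattice is preserved for small \(|t|\). The admissible-shadow-system theorem from \cite{CLXX-Mahler}, adapted to the symmetric case, is the tool here. Along such a system \(|P_t|\) is affine in \(t\), because the speed is affine on facets, which makes the system a shadow system of Rogers--Shephard type. By a Campi--Gronchi type argument, \(|P_t^\circ|^{-1}\) is convex in \(t\). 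Hence \(t\mapsto\VP(P_t)\) is quasi-concave, and minimality forces it to be constant near \(0\). The Meyer--Reisner rigidity then forces the motion to be trivial, that is, \(P_t\) is a linear image of \(P\).

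The combinatorial step is to show that a symmetric polytope that is neither a parallelepiped nor an octahedron admits a nontrivial symmetric admissible speed. Pick an antipodal pair of vertices \(\pm v\). Requiring \(\alpha\) to be affine on each facet imposes linear constraints. Counting dimensions gives the number of free parameters as the number of vertex pairs minus the constraints from non-triangular facets. The linear motions \(x\mapsto x+t(\ell\cdot x)\theta\) account for 3 parameters. If every facet is a triangle (a simplicial polytope), every odd assignment on the vertex pairs is admissible, so there are \(N\) parameters. A nontrivial speed therefore exists once \(N>3\), that is, unless \(P\) is the octahedron. Dually, if \(P\) is simple, we apply the argument to \(P^\circ\), which is simplicial, and conclude that \(P^\circ\) is an octahedron, so \(P\) is a parallelepiped.

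The main obstacle is the mixed case, where \(P\) is neither simple nor simplicial. There I would first try to perturb the minimizer inside the constant-\(\VP\) family produced above, or use the affine-on-facets freedom, to break a non-triangular facet or a high-degree vertex. This should produce a minimizer with fewer vertices in \(P\) or in \(P^\circ\), contradicting the minimality of the vertex count. The key steps are to ensure that the motion stays inside \(\mathcal K_N\), and to ensure that vertices that become coplanar (at the endpoint where the face lattice changes) genuinely reduce complexity. Euler's relation for centrally symmetric 3-polytopes should bound the constraints against the parameters, giving more than 3 free parameters unless \(P\) or \(P^\circ\) is a cube. Once minimizers are shown to be cubes or octahedra, both of which have \(\VP=8\cdot\frac43=32/3\), the inequality follows. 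For the equality case, a polytope attaining \(32/3\) is itself a minimizer of some \(\mathcal K_N\), and the same rigidity forces it to be a parallelepiped or an octahedron.
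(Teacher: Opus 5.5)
Up to the combinatorial step your plan is the paper's. You take a vertex-bounded symmetric class with a minimizer, use odd speeds that preserve the face lattice and make $t\mapsto|P_t|$ affine, and get local exclusion via Meyer--Reisner rigidity for both $P$ and $P^\circ$. Two small points there: for the polar you need that $(P^\circ_t)^\circ$ has $F(P^\circ_t)=F(P^\circ)=V(P)$ vertices, so it stays in the class; and quasi-concavity alone gives constancy only on one side of $0$, which still suffices for rigidity.

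The genuine gap is the mixed case, and it cannot be closed with your notion of admissibility. If $\alpha$ must be affine on \emph{every} facet, Euler's formula gives only
\[
\dim\ \ge\ \frac V2-\frac12\sum_{G\in\F(P)}\bigl(m(G)-3\bigr)=\frac{F-V}{2}+2 .
\]
This exceeds $3$ only when $F\ge V+4$; applied to $P$ and $P^\circ$ it yields $|V-F|\le2$ and nothing more. Nor is this merely a weakness of counting. Take the antiprism $P=\conv\{\pm(\cos\frac{2\pi k}{5},\sin\frac{2\pi k}{5},1):0\le k\le 4\}$. Its two pentagonal facets cut the five free vertex-pair values down to three, i.e.\ only trivial speeds. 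A short circulant computation shows that its polar, a pentagonal trapezohedron, likewise carries only the three trivial speeds. So your claim that $P$ or $P^\circ$ has more than three free parameters unless one of them is a cube is false, and your tools cannot exclude this $P$ as a minimizer.

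The fallback does not help either. You have just shown that every admissible family through a minimizer is $A_t(P)$, a linear image with the same face lattice. So there is no nontrivial constant-$\VP$ family in which to break a facet or lower the vertex count. Outside the range where the face lattice is preserved, neither affinity of $|P_t|$ nor local exclusion is available.

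The missing idea is to impose the affinity condition only on facets \emph{not} parallel to $\theta$. If $\theta\in\lin(G-G)$, the vertices of $G$ move inside $\aff G$, so $G$ stays planar. Such facets also project to segments in $\theta^\perp$, so both face-lattice persistence and affinity of $|P_t|$ survive. This adds $C_\theta(P)=\frac12\sum_{\theta\in\lin(G-G)}(m(G)-3)$ to the count, and the freedom to choose $\theta$ settles the mixed case:
\begin{itemize}
\item \emph{$V=F+2$.} A vertex of degree $\ge4$ gives a facet of $P^\circ$ with $\ge4$ vertices, and $\theta$ parallel to it gives $\dim A^{\rm s}_\theta(P^\circ)\ge4$. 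Hence $P$ is simple, so $2E=3V$, $V=8$, $F=6$, and $P$ is a parallelepiped.
\item \emph{$F=V+2$.} This is the dual case.
\item \emph{$V=F$.} Choosing $\theta$ parallel to a facet with $\ge5$ vertices, or to an edge shared by two quadrilaterals, gives $C_\theta\ge2$. So only triangles and pairwise non-adjacent quadrilaterals occur, forcing $p=4$ and $4q\le3p=12$. Then $V\le6$, which is impossible since a symmetric $P$ with six vertices is an octahedron with $F=8$.
\end{itemize}
In the antiprism example, a horizontal $\theta$ frees all five vertex pairs. Without this relaxation your argument only identifies minimizers that are simple or simplicial, and your equality characterization inherits the same gap.
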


The paper is organized as follows.  Section \ref{sec:tools} sets up the notations and recalls the basic polarity, compactness, and shadow-system tools. Section \ref{sec:admissible} defines symmetric admissible speeds and proves the local face-lattice stability and affine-volume property.  Section \ref{sec:local-exclusion} combines these deformations with the Meyer--Reisner theorem and with Santal\'o
polarity to show that a minimizer in a bounded symmetric vertex class cannot admit a
non-trivial admissible shadow system, nor can its polar body. Section \ref{sec:combinatorial} proves a three-dimensional Euler-type inequality used in the counting argument. Finally, Section \ref{sec:main-theorem} proves Theorem\ref{thm:main}.

\section{Basic notations and tools}\label{sec:tools}

Throughout, $B^3_2$ denotes the Euclidean unit ball in $\R^3$. By a polytope, we always mean a convex polytope. By $\aff A$, $\lin A$, $\conv A$, we mean the affine hull, linear span, and convex hull of a set $A\subset \R^3$. The dimension of a set is defined as the dimension of its affine hull. We shall also use the standard notions of vertices (or extreme points),
faces, and facets of a convex set; see Schneider \cite{Schneider} for a
general reference. 

For a convex body $K\subset\R^3$ with $0\in\operatorname{int}K$, its polar body is
\[
        K^\circ=\{y\in\R^3:x\cdot y\leq1\text{ for all }x\in K\}.
\]
The bipolar identity gives $(K^\circ)^\circ=K$. We shall also use the Santal\'o polar in the shadow system. If $K\subset\R^3$ is a convex body and $z\in\operatorname{int}K$, define
\[
        K^z=\{y\in\R^3:(y-z)\cdot(x-z)\leq1\text{ for all }x\in K\}.
\]
The Santal\'o point $s(K)$ is the unique point of $\operatorname{int}K$ minimizing $z\mapsto |K^z|$. When $K=-K$, its Santal\'o point is the origin and $K^{s(K)}=K^\circ$.
For a general convex body \(K\), we write
\[
    \VP(K)=|K|\,|K^{s(K)}|.
\]
For an origin-symmetric body $K=-K$, we write
\[
        \VP(K)=|K|\,|K^\circ|.
\]
The volume product $\VP$ is affine invariant; see \cite[Section 1]{FMZ2012}. And polarity preserves the product:
\[
        \VP(K^\circ)=\VP(K).
\]
Indeed, $ \VP(K^\circ)=|K^\circ||(K^\circ)^\circ|=|K^\circ||K|=\VP(K)$.

% $|(TK)^\circ|=|\det T|^{-1}|K^\circ|$ and $(K^\circ)^\circ=K$.

We record some basic properties of volume product and polar bodies.
% \begin{lemma}\label{lem:polar-nonincreasing}
% For every convex body $K\subset\R^3$,
% \[
%         \VP(K^*)\leq \VP(K).
% \]
% \end{lemma}
%  Indeed, 
% $$\VP(K^*)=|K^*||(K^*)^{s(K^*)}|\leq |K^*||(K^{s(K)})^{s(K)}|=|K||K^*|=\VP(K).$$

\begin{lemma}\cite[Theorem 1.3]{BayerLee1993}\label{lem:polarity-face-lattice}
Let $P\subset\R^3$ be a full-dimensional polytope and let $z\in\operatorname{int}P$. Then $P^z$ is a full-dimensional polytope and polarity with center $z,$ and
\[
        V(P^z)=F(P),\qquad F(P^z)=V(P).
\]
\end{lemma}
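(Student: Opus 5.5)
We reduce to the case $z=0$ by translation, then use the standard vertex–facet duality between a polytope and its polar; here $V(\cdot)$ and $F(\cdot)$ denote the number of vertices and of facets. Since $(P-z)^\circ = P^z - z$, translating by $z$ preserves combinatorics. So it suffices to treat a full-dimensional polytope $Q=P-z$ with $0\in\operatorname{int}Q$ and show that $Q^\circ$ is a full-dimensional polytope with $V(Q^\circ)=F(Q)$ and $F(Q^\circ)=V(Q)$.

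\textbf{Step 1: $Q^\circ$ is a bounded polytope with nonempty interior.}
Write $Q=\conv\{v_1,\dots,v_m\}$, where the $v_i$ are the vertices. A linear functional is bounded by $1$ on $Q$ if and only if it is bounded by $1$ on every $v_i$, so
\[
Q^\circ=\{y: v_i\cdot y\le 1,\ i=1,\dots,m\},
\]
an intersection of finitely many half-spaces, hence a polyhedron.

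\begin{itemize}
\item It is bounded: $0\in\operatorname{int}Q$ gives $rB^3_2\subset Q$ for some $r>0$, so $Q^\circ\subset r^{-1}B^3_2$.
\item It has interior: $Q\subset RB^3_2$ for some $R$, so $R^{-1}B^3_2\subset Q^\circ$.
\end{itemize}

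\textbf{Step 2: the facets of $Q$ give the vertices of $Q^\circ$.}
Since $0\in\operatorname{int}Q$, every facet $G$ of $Q$ has a unique normalization $G=Q\cap\{x: a_G\cdot x=1\}$ with $Q\subset\{a_G\cdot x\le1\}$. Then $Q=\bigcap_G\{a_G\cdot x\le 1\}$, and by bipolarity $Q^\circ=\conv\{a_G\}$.

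Each $a_G$ is a vertex of $Q^\circ$. To see this, pick three affinely independent points $x_1,x_2,x_3\in G$. The point $a_G$ is the unique solution of $x_j\cdot y=1$ for $j=1,2,3$; this uses $0\notin\aff G$, which holds because $0$ is interior to $Q$. So $a_G$ is the unique point of $Q^\circ$ on which all three supporting inequalities $x_j\cdot y\le 1$ are tight, which makes it an exposed point of $Q^\circ$.

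Distinct facets have distinct normals, so the map $G\mapsto a_G$ is injective. Together with $Q^\circ=\conv\{a_G\}$ (every vertex of $Q^\circ$ is among the $a_G$), this gives $V(Q^\circ)=F(Q)$.

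\textbf{Step 3: the vertices of $Q$ give the facets of $Q^\circ$.}
Apply Step 2 to $Q^\circ$, which is legitimate by Step 1, and use $(Q^\circ)^\circ=Q$. This gives $V(Q)=F(Q^\circ)$.

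\textbf{Main obstacle.}
The only delicate point is the exposedness and injectivity argument in Step 2: it needs $0\notin\aff G$ and the unique normalization of each facet, and both rely on $z$ lying in the interior of $P$. The rest of the argument is routine bookkeeping.

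Alternatively, one can simply cite the inclusion-reversing face-lattice anti-isomorphism from \cite{BayerLee1993}; it contains the statement as the special case of faces of dimension $0$ and $2$.
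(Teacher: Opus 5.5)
Your proof is correct, and it takes a different route from the paper only because the paper gives no argument here. It simply cites Bayer--Lee's theorem that polarity about an interior point induces an inclusion-reversing anti-isomorphism of face lattices; that is the alternative you mention at the end. Your self-contained route establishes exactly the two counts in the statement: translate to $z=0$, exhibit $Q^\circ$ as a bounded full-dimensional polyhedron, show that $G\mapsto a_G$ lands injectively on exposed points, and then apply the same step to $Q^\circ$.

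One small gloss needs tightening. Bipolarity applied to $Q=\{a_G\}^\circ$ gives $Q^\circ=\conv(\{a_G\}\cup\{0\})$, not $\conv\{a_G\}$. This is harmless, because Step 1 already shows $0\in\operatorname{int}Q^\circ$, so $0$ is not a vertex. Alternatively, boundedness of $Q$ forces $0\in\conv\{a_G\}$.

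What the citation buys over your argument is the full incidence reversal, not just the counts, and the paper does lean on it later. In Case 1 of Lemma \ref{lem:combinatorial} it uses that the facet of $P^\circ$ dual to a vertex $v$ has exactly $d(v)$ vertices. In Lemma \ref{lem:minimizer-exclusion} it speaks of $Q^\circ$ having the dual face lattice. Your Step 2 extends to this at once. For $v\in Q$ one has $v\cdot a_G\le 1$, with equality iff $v\in G$. Hence the facet $\{y\in Q^\circ: v\cdot y=1\}$ equals $\conv\{a_G: v\in G\}$, and its vertex count is the number of facets through $v$. In dimension three this equals $d(v)$, since the vertex figure is a polygon.
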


% The following lemma follows from the affine invariance property of volume product, and the basic fact that the
% number of vertices of the limit polytope is no more than the limsup of the number of vertices of the convergent sequence of polytopes.

\begin{lemma}\label{lem:bounded-class}
Fix $N\geq6$, and let $\mathcal C_N^{\rm s}$ be the class of all origin-symmetric three-dimensional convex polytopes in $\R^3$ with at most $N$ vertices.  Then $\VP$ attains its infimum on $\mathcal C_N^{\rm s}$.
\end{lemma}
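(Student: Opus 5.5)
The plan is a standard compactness argument modulo affine maps. Take a minimizing sequence $P_k\in\mathcal C_N^{\rm s}$ with $\VP(P_k)\to m:=\inf_{\mathcal C_N^{\rm s}}\VP$. The infimum is finite and positive: it is bounded above by the value at the cube, and below by a positive constant (for instance by Bourgain--Milman, or more elementarily via John's position below). Since $\VP$ is invariant under linear maps and origin-symmetry is preserved by them, I replace each $P_k$ by a linear image $T_kP_k$ in John's position. Concretely, I choose $T_k\in GL(3)$ so that the John ellipsoid of $T_kP_k$ is $B^3_2$; for origin-symmetric bodies this ellipsoid is centered at the origin, so
\[
B^3_2\subset T_kP_k\subset \sqrt3\,B^3_2 .
\]
The new polytopes are still origin-symmetric, still have at most $N$ vertices, and have the same volume product. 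I rename them $P_k$.

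Next I extract a convergent subsequence. Each $P_k=\conv\{v_1^k,\dots,v_{N}^k\}$, where vertices are repeated if there are fewer than $N$, and every $v_i^k$ lies in $\sqrt3 B^3_2$. By compactness of $(\sqrt3B^3_2)^N$ I pass to a subsequence along which $v_i^k\to v_i$ for each $i$. Then $P_k\to P:=\conv\{v_1,\dots,v_N\}$ in the Hausdorff metric. The limit $P$ inherits the two-sided inclusion $B^3_2\subset P\subset\sqrt3B^3_2$, so it is a full-dimensional polytope with $0$ in its interior. It is origin-symmetric, because $P_k=-P_k$ passes to the Hausdorff limit. It has at most $N$ vertices, since its vertices lie among the $v_i$. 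Hence $P\in\mathcal C_N^{\rm s}$.

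The remaining, and main, step is continuity of $\VP$ along the sequence. Volume is continuous in the Hausdorff metric, so $|P_k|\to|P|$. For the polars, the uniform inclusions give $(\sqrt3)^{-1}B^3_2\subset P_k^\circ\subset B^3_2$. The support functions satisfy $h_{P_k}\to h_P$ uniformly on the sphere, and the radial function of $P_k^\circ$ equals $1/h_{P_k}$. Since $h_{P_k}\geq1$, the radial functions $1/h_{P_k}$ converge uniformly to $1/h_P$. Therefore $P_k^\circ\to P^\circ$ in the Hausdorff metric, and $|P_k^\circ|\to|P^\circ|$.

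Combining these, $\VP(P)=\lim\VP(P_k)=m$, so the infimum is attained at $P$. The only genuine subtlety is ruling out degeneration of the limit, meaning that $P$ might be lower-dimensional or have $0$ on its boundary. The affine normalization to John's position is exactly what prevents this, and it is where the affine invariance of $\VP$ recorded above is used. The hypothesis $N\geq6$ only ensures that the class is nonempty, since the octahedron lies in it.
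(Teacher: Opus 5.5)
Your proposal is correct and follows essentially the same route as the paper. Both arguments put a minimizing sequence into John's position so that $B^3_2\subset P_k\subset\sqrt3\,B^3_2$, extract a convergent subsequence of the (at most $N$) vertices, and pass to the limit using origin-symmetry and the Hausdorff continuity of volume and polarity. You actually give more detail than the paper, which only sketches this and refers to the analogous lemma in its earlier work.
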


\begin{proof}
% Let $(P_m)\subset\mathcal C_N^{\rm s}$ be a minimizing sequence. By affine invariance of $\VP$, we may assume
%  each $P_m$ is in John position after performing an affine transformation. Thus,
% by John's lemma  we have
% \[
%         B^3_2\subset P_m\subset \sqrt 3B^3_2.
% \]
% Write
% \[
%         P_m=\conv\{x_{m,1},\ldots,x_{m,N}\},
% \]
% allowing repetitions when $P_m$ has fewer than $N$ vertices. Passing to a subsequence we may assume $x_{m,i}\to x_i$ for every $i$. Set
% \[
%         P=\conv\{x_1,\ldots,x_N\}.
% \]
% The inclusions pass to the Hausdorff limit, so $B^3_2\subset P$ and $P$ has at most $N$ vertices. 

The proof is similar to that of Lemma 2.3 in \cite{CLXX-Mahler}. Indeed, given a minimizing sequence \((P_j)\subset\mathcal C_N^{\rm s}\), John's
theorem allows us, after suitable linear normalizations, to obtain uniform
inner and outer bounds; hence a subsequence converges to a limit convex body
\(P\). The origin-symmetry of \(P\) follows directly from the fact that \(P\) is the Hausdorff limit of origin-symmetric compact convex sets. \end{proof}

% Because the origin remains uniformly in the interior, central polarity is continuous under this Hausdorff convergence, and so is volume \cite[Lemma 3]{FMZ2012}.  Therefore
% \[
%         \VP(P)=\lim_{m\to\infty}\VP(P_m),
% \]
% so $P$ is a minimizer.

A \emph{shadow system} along a unit direction \(\theta\in\mathbb S^2\), as introduced  by Rogers and Shephard \cite{RogersShephard1958}, is a family 
\[
        L_t=\conv\{x+t\alpha(x)\theta:x\in B\},
\]
where the {\it base set}  $B\subset\R^3$ is bounded, the function $\alpha:B\to\R$ is bounded, and $t$ ranges over an interval. It is non-degenerate if every $L_t$ has non-empty interior.  The following results play a crucial role in our proof. 

\begin{theorem}\cite[Proposition 1]{FMZ2012}\label{thm:shadow-input}
Let $(L_t)_{t\in[-a,a]}$ be a non-degenerate shadow system in $\R^3$ along a direction $\theta\in \sn$. Then
\[
        t\longmapsto |L_t^{s(L_t)}|^{-1}
\]
is convex on $[-a,a]$. If, in addition, $t\mapsto |L_t|$ is affine and $t\mapsto \VP(L_t)$ is constant on $[-a,a]$, then there exist $w\in\R^3$ and $\beta\in\R$ such that, for every $t\in[-a,a]$, one has $L_t=A_t(L_0)$, where $A_t:\mathbb{R}^3 \to \mathbb{R}^3$ is the affine map defined by
\[
         A_t(x)=x+t(w\cdot x+\beta)\theta.
\]
\end{theorem}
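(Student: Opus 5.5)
Since this statement is quoted from \cite[Proposition 1]{FMZ2012}, where it goes back to Meyer--Reisner \cite{MR} and, in the symmetric case, Campi--Gronchi \cite{CG}, the plan is to reconstruct their argument rather than find a new one. Choose coordinates with $\theta=e_3$ and write points as $x=(x',x_3)$ with $x'\in\theta^\perp$. Then $L_t=\conv\{(x',x_3+t\alpha(x)):x\in B\}$. The first step is to describe the polar $L_t^z$ for a center $z=z(t)$ that moves affinely in $t$ along $\theta$ (and is allowed to move in $\theta^\perp$ as needed). The condition $y\in L_t^{z}$ reads
\[
(x'-z')\cdot(y'-z')+(x_3+t\alpha(x)-z_3)(y_3-z_3)\le1\quad\text{for all }x\in B.
\]
For fixed $y_3-z_3=s\neq0$, dividing by $s$ shows this condition is affine in $t$. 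Following Meyer--Reisner, I would apply a projective change of variables, $(y',s)\mapsto (y'/s,1/s)$ on each half-space $\{s>0\}$ and $\{s<0\}$. After it, the family of polar bodies becomes a family whose lines parallel to a fixed direction are moved by a parallel chord movement. Equivalently, the set $\{(t,u):u\in\widetilde{L_t^{z(t)}}\}$ is convex in $\R\times\R^3$.

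The second step turns this convexity into the claimed convexity of $t\mapsto|L_t^{s(L_t)}|^{-1}$. Writing the volume of the polar as an integral over the transformed variables, the Jacobian $|s|^{-4}$ is exactly what converts a Brunn--Minkowski/Prékopa--Leindler-type inequality for the transformed family into the concavity-type statement "$1/|L_t^{z(t)}|$ is convex along affine choices of $z(t)$". This is the classical $-1$-concavity (harmonic) form. Concretely, for $t=\lambda t_1+(1-\lambda)t_2$ and any $z_1,z_2$, I would show
\[
|L_t^{\lambda z_1+(1-\lambda)z_2}|^{-1}\le\lambda|L_{t_1}^{z_1}|^{-1}+(1-\lambda)|L_{t_2}^{z_2}|^{-1}.
\]
Taking $z_i=s(L_{t_i})$ and then using minimality of the Santaló point at time $t$, so that $|L_t^{s(L_t)}|\ge$ the volume at the interpolated center, gives the convexity of $t\mapsto|L_t^{s(L_t)}|^{-1}$. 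Non-degeneracy guarantees that all Santaló points exist and that the integrals are finite.

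The third step is the equality case. Suppose $|L_t|$ is affine and $\VP(L_t)=|L_t|\,|L_t^{s(L_t)}|$ is constant. Then $|L_t^{s(L_t)}|^{-1}$ is affine, so every inequality in the chain above is an equality for all $t_1,t_2,\lambda$. Equality in the one-dimensional (fibrewise) Brunn--Minkowski step forces the transformed chords to be translates depending affinely on $t$. Pulled back through the projective map, this says that the support function of $L_t$ relative to $s(L_t)$ changes in the direction $\theta$ by a linear functional of $y'$. From there I would deduce that $\alpha$ coincides, on the extreme points of $L_0$, with an affine function $x\mapsto w\cdot x+\beta$, and hence that $L_t=A_t(L_0)$.

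I expect this rigidity step to be the main obstacle. One has to control the Santaló points $s(L_t)$ to ensure they move affinely in $t$, which follows from strict convexity of $z\mapsto|K^z|$ together with the equality in the second step. One must also pass from almost-everywhere fibrewise equality to an identity on all of $B$ (or on its extreme points). I would first try to prove equality of the polar bodies as sets by continuity of support functions, and only then read off $\alpha$.
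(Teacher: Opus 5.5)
Your second step fails, and the equality step rests on it. (The paper gives no proof of this statement; it is quoted from \cite{FMZ2012}, so your reconstruction has to stand on its own.)

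The two-point inequality you propose, $|L_t^{\lambda z_1+(1-\lambda)z_2}|^{-1}\le\lambda|L_{t_1}^{z_1}|^{-1}+(1-\lambda)|L_{t_2}^{z_2}|^{-1}$ for arbitrary $z_1,z_2$, is false. For the constant system $\alpha\equiv0$ it would make $z\mapsto|L^z|^{-1}$ convex on $\operatorname{int}L$. That is impossible, since this function is positive inside and tends to $0$ at $\partial L$. Concretely, take $L=[-1,1]^3$, $z_{1,2}=\pm(1-\varepsilon)e_3$ and $\lambda=\tfrac12$: the left side is $\tfrac34$ and the right side tends to $0$.

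What the projective substitution plus Borell--Brascamp--Lieb on $\theta^\perp$ actually gives is weaker. It requires the center to move only along $\theta$, as $z(\tau)=z_0+\tau\beta\theta$. A $\tau$-dependent component $\tau b'\in\theta^\perp$ adds a bilinear term $\tau\,b'\cdot u$ to the transformed constraint and destroys joint convexity, so such motion is not available ``as needed''. Under this restriction, the volume of each half $\{\pm(y-z(\tau))\cdot\theta>0\}$ of $L_\tau^{z(\tau)}$ is, separately, a $(-1)$-concave function of $\tau$. A sum of $(-1)$-concave functions need not be $(-1)$-concave. For the cube with $\alpha\equiv0$ and $z(\tau)=\tau e_3$, the halves have volumes $\frac{2}{3(1-\tau)}$ and $\frac{2}{3(1+\tau)}$, while $|L^{z(\tau)}|^{-1}=\frac34(1-\tau^2)$ is concave. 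This is exactly what separates the non-symmetric case from Campi--Gronchi's, where the two halves coincide.

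Your use of the Santal\'o point also runs the wrong way. Minimality gives $|L_t^{s(L_t)}|\le|L_t^{z}|$ for every $z$. So an upper bound on $|L_t^{z_\lambda}|^{-1}$ gives no upper bound on the larger quantity $|L_t^{s(L_t)}|^{-1}$.

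The argument has to be organized the other way round. Fix $t=\lambda t_1+(1-\lambda)t_2$. Use the center $s(L_t)$ at time $t$ and $z_i=s(L_t)+(t_i-t)\beta\theta$ at the endpoints, so that $\tau\mapsto L_\tau-z(\tau)$ is again a shadow system along $\theta$. Let $A,B$ and $a_i,b_i$ be the volumes of the upper and lower halves of $L_t^{s(L_t)}$ and $L_{t_i}^{z_i}$. The half-space estimates $1/A\le\lambda/a_1+(1-\lambda)/a_2$ and $1/B\le\lambda/b_1+(1-\lambda)/b_2$ combine to $1/(A+B)\le\lambda/(a_1+b_1)+(1-\lambda)/(a_2+b_2)$ provided $a_1/b_1=a_2/b_2$.

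Such a $\beta$ exists by the intermediate value theorem, for three reasons. First, $z_1$ and $z_2$ move in opposite directions along their chords parallel to $\theta$. Second, each ratio $a_i/b_i$ runs monotonically between $0$ and $\infty$. Third, the admissible $\beta$-ranges overlap, because the top and bottom of the chord of $L_\tau$ over a fixed point of $\theta^\perp$ are convex, respectively concave, in $\tau$. Only then does minimality of the Santal\'o points at $t_1$ and $t_2$, not at $t$, finish the convexity proof.

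Your equality step inherits the defect. With the corrected chain, equality forces $z_i=s(L_{t_i})$ by uniqueness of the Santal\'o point, which is what makes $s(L_\tau)$ move affinely along $\theta$. The remaining rigidity must come from the equality case of Borell--Brascamp--Lieb on $\theta^\perp$ for both halves. That equality case allows dilations and additive constants, as the form of $A_t$ requires. It is not a fibrewise one-dimensional Brunn--Minkowski equality producing translates.
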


\begin{corollary}\cite[Corollary 2(2)]{FMZ2012}\label{cor:one-sided}
Let $(L_t)_{t\in[-a,a]}$ be a non-degenerate shadow system in $\R^3$ along a direction $\theta\in \sn$, and suppose that $t\mapsto |L_t|$ is affine. If
\[
        \VP(L_0)=\min_{t\in[-a,a]}\VP(L_t),
\]
then $\VP(L_t)$ is constant on $[0,a]$ or constant on $[-a,0]$.
\end{corollary}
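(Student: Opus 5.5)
The statement to prove is \cref{cor:one-sided}. I would derive it directly from \cref{thm:shadow-input}, using convexity of reciprocals and affineness of the volume. Set
\[
f(t)=|L_t^{s(L_t)}|^{-1},\qquad g(t)=|L_t|.
\]
By \cref{thm:shadow-input}, $f$ is convex on $[-a,a]$, and $f>0$ because the system is non-degenerate. By hypothesis $g$ is affine, and $g>0$. Then
\[
\VP(L_t)=\frac{g(t)}{f(t)},
\]
and the claim is a statement about the ratio of a positive affine function to a positive convex function.

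The key step is the following one-variable argument. Let $m=\VP(L_0)=\min_{[-a,a]} g/f$. Then the function
\[
h(t)=g(t)-m f(t)
\]
satisfies $h\le 0$ on $[-a,a]$, because $g\le m f$ would fail only if $g/f>m$, and $m$ is the minimum... careful: the inequality goes the other way. Since $m$ is the minimum of $g/f$, we have $g/f\ge m$, so $g\ge mf$ and $h\ge0$, with $h(0)=0$. Moreover $h$ is concave, being an affine function minus $m>0$ times a convex function.

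A nonnegative concave function on $[-a,a]$ that vanishes at the interior point $0$ must vanish on one side. To see this, suppose $h(t_1)>0$ for some $t_1\in(0,a]$ and $h(t_2)>0$ for some $t_2\in[-a,0)$. Then $0$ is a strict convex combination of $t_1$ and $t_2$, and concavity gives
\[
h(0)\ge \lambda h(t_1)+(1-\lambda)h(t_2)>0,
\]
a contradiction. Hence $h\equiv0$ on $(0,a]$ or on $[-a,0)$. Since $h(0)=0$ as well, $h\equiv 0$ on $[0,a]$ or on $[-a,0]$.

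On that subinterval $g=mf$, so $\VP(L_t)=m$ is constant there, which is the assertion. If $a=0$ the statement is trivial.

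There is no real obstacle here beyond correctly identifying the convexity of $f$. That convexity is exactly the content of the Meyer--Reisner/Campi--Gronchi shadow-system theorem quoted in \cref{thm:shadow-input}, which we take as given. The only care needed is with signs, so that $h$ comes out concave and nonnegative.
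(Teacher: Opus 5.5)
Your argument is correct and is the standard one-variable derivation from the convexity in Theorem~\ref{thm:shadow-input}. With $f=|L_t^{s(L_t)}|^{-1}$ convex and $g=|L_t|$ affine, the function $g-mf$ is concave and nonnegative and vanishes at the interior point $0$, so it vanishes on one side; the paper gives no proof of its own and simply quotes the corollary from \cite{FMZ2012}, where it follows from that convexity in essentially this way. The only fix needed is cosmetic: delete the mid-sentence self-correction about the direction of the inequality $h\ge 0$.
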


As observed in \cite{CLXX-Mahler}, Corollary~\ref{cor:one-sided} admits the
following slightly stronger form. 

\begin{lemma}\label{lem:vp-constant-interval}\cite[Lemma 2.6]{CLXX-Mahler}
Under the assumptions of Corollary~\ref{cor:one-sided}, the quantity
\(\VP(L_t)\) is constant on \([-a,a]\).
\end{lemma}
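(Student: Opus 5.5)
We prove \cref{lem:vp-constant-interval} by upgrading the one-sided conclusion of \cref{cor:one-sided} to both sides, using the convexity in \cref{thm:shadow-input} together with the affine-map rigidity. Since $t\mapsto|L_t|$ is affine and $t\mapsto|L_t^{s(L_t)}|^{-1}$ is convex, $f(t):=|L_t|^{-1}\VP(L_t)^{\phantom{1}}$ is not convex in general. We therefore argue through the rigidity statement rather than through convexity of $\VP$ directly.

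By \cref{cor:one-sided}, we may assume (replacing $\theta$ by $-\theta$, i.e.\ $t$ by $-t$, if necessary) that $\VP(L_t)$ is constant on $[0,a]$. The restricted family $(L_t)_{t\in[0,a]}$ is again a non-degenerate shadow system with $|L_t|$ affine and $\VP$ constant. Reparametrize it by $s=t-a/2\in[-a/2,a/2]$; this is still a shadow system, with base set $L_{a/2}$ and speed obtained by composing. Applying the equality part of \cref{thm:shadow-input} gives $w,\beta$ with $L_t=A_{t-a/2}(L_{a/2})$ for $t\in[0,a]$, where each $A$ is an affine map of the form $x\mapsto x+s(w\cdot x+\beta)\theta$. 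Composing, $L_t=B_t(L_0)$ on $[0,a]$ for affine maps $B_t$ of the same shape, so on $[0,a]$ each point of $L_0$ moves by a speed that is affine in $x$ restricted to $L_0$.

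Next we extend to negative $t$. The key observation is that in a shadow system, the support function in directions orthogonal to $\theta$ and the upper/lower boundary graphs in direction $\theta$ depend on $t$ in a convex/concave way, namely $L_t$'s upper graph $t\mapsto \max\{x_\theta+t\alpha\}$ over fibers is convex in $t$. Set $\tilde L_t:=C_t(L_0)$ for $t\in[-a,0]$, where $C_t$ is the affine extension $x\mapsto x+t(w'\cdot x+\beta')\theta$ determined by the $[0,a]$ part. Comparing with $L_t$ via the convexity of the fiberwise upper and lower boundaries, we get $L_t\supseteq \tilde L_t$ for $t<0$. Since $|L_t|$ is affine and $|\tilde L_t|=|L_0|$ is constant, with $|L_t|=|L_0|$ on $[0,a]$ by affinity, we obtain $|L_t|=|L_0|$ on all of $[-a,a]$. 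Hence $L_t=\tilde L_t$, an affine image of $L_0$, and $\VP(L_t)=\VP(L_0)$ by affine invariance.

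The main obstacle is the containment $L_t\supseteq\tilde L_t$ for $t<0$. The intended justification is that for each base point the moving point $x+t\alpha(x)\theta$ sweeps a line, and the upper boundary function of the fiber over each $y\in\theta^\perp$ is a supremum of affine functions of $t$, hence convex. It agrees with the affine function coming from $\tilde L$ on $[0,a]$, so by convexity it dominates that affine function for $t<0$; the lower boundary is handled symmetrically. Care is needed because the projection onto $\theta^\perp$ is independent of $t$, so the fibers are indexed consistently; this is what makes the argument work.
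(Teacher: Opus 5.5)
Your strategy can be made to work: use rigidity on the half-interval where $\VP$ is constant, then fibrewise convexity to push the affine description to $t<0$. The containment step is sound. The top of the fibre of $L_t$ over $y\in\theta^\perp$ is a maximum of functions $r+ts$ over a compact set of pairs $(r,s)$ that does not depend on $t$, so it is convex in $t$. A convex function that agrees with an affine one on $[0,a]$ dominates it on $[-a,0]$.

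\emph{The volume step is false as written.} The shear $C_t(x)=x+t(w'\cdot x+\beta')\theta$ has determinant $1+t\,w'\cdot\theta$, so $|\tilde L_t|=(1+t\,w'\cdot\theta)|L_0|$. Likewise $|L_t|$ need not equal $|L_0|$ on $[0,a]$. For example, take $\alpha(x)=x\cdot\theta$ on $B=L_0$ with $a<1$. Then $L_t$ is $L_0$ stretched by the factor $1+t$ along $\theta$: $\VP$ is constant but the volume is not.

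The repair is as follows. On $[0,a]$ you have $|L_t|=(1+t\,w'\cdot\theta)|L_0|$. Both sides are affine, so this holds on all of $[-a,a]$, and non-degeneracy then gives $1+t\,w'\cdot\theta>0$ there. You need this positivity, and did not check it, to identify the fibre endpoints of $\tilde L_t$ for $t<0$ with the affine continuations $(1+t\,w'\cdot\theta)u_0(y)+t(w'\cdot y+\beta')$ and their lower analogues. If the factor changed sign, the fibres of $\tilde L_t$ would flip and $\tilde L_t\subseteq L_t$ would not follow. With it, $\tilde L_t\subseteq L_t$ and $|\tilde L_t|=|L_t|$ give $L_t=\tilde L_t$, and affine invariance finishes the proof.

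You also dismissed the convexity route by looking at the wrong quantity, namely $|L_t|^{-1}\VP(L_t)=|L_t^{s(L_t)}|$. The paper does not reprove this lemma; it quotes it from \cite{CLXX-Mahler}. But the stronger form follows in one line. Let $m=\VP(L_0)$ and $g(t)=|L_t^{s(L_t)}|^{-1}$, and set $\phi(t)=m\,g(t)-|L_t|=g(t)\bigl(m-\VP(L_t)\bigr)$.
\begin{itemize}
\item $\phi$ is convex, being convex minus affine by \cref{thm:shadow-input}.
\item $\phi\le 0$ on $[-a,a]$ by minimality.
\item $\phi$ vanishes at the interior point $t=0$.
\end{itemize}
A convex function with an interior maximum is constant, so $\phi\equiv 0$ and $\VP(L_t)\equiv m$.

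This argument uses neither \cref{cor:one-sided}, nor the rigidity part of \cref{thm:shadow-input}, nor any fibre analysis. It also avoids the time-reparametrisation you need in order to apply the symmetric-interval rigidity statement on $[0,a]$. Your repaired argument additionally gives $L_t=C_t(L_0)$ on the whole interval. However, that also follows by applying rigidity once constancy on $[-a,a]$ is known.
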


Theorem~\ref{thm:shadow-input} and Lemma~\ref{lem:vp-constant-interval} will
not be invoked explicitly later. They are used only indirectly, through the
auxiliary results in Sections \ref{sec:admissible} and \ref{sec:local-exclusion}, which are obtained from the 
corresponding results of \cite{CLXX-Mahler} together with the new symmetric
definitions introduced here.  

\section{symmetric admissible shadow systems}\label{sec:admissible}

Let $P=-P\subset\R^3$ be a fixed, origin-symmetric and three-dimensional polytope  with $V$ 
distinct vertices  \(x_1,\ldots,x_V\). An
$\alpha=(\alpha_1,\ldots,\alpha_V)\in\mathbb R^V$
will be called a {\it  symmetric  speed vector} if $x_i=-x_j$ implies that $\alpha_i=-\alpha_j.$ Given a direction
\(\theta\in \sn\) and a  symmetric speed vector 
\(\alpha\in\mathbb R^V\), define a shadow system 
\[
        x_i(t)=x_i+t\alpha_i\theta,
        \qquad
        P_t=\conv\{x_i(t):1\leq i\leq V\}, \quad t\in [-c,c],
\]
where $c>0$. Note that $\alpha(-x)=-\alpha(x)$ preserves central symmetry. In the following, for a facet $F$ of $P$, $\lin(F-F)$ denotes the two-dimensional subspace of $\mathbb{R}^3$ parallel to $F.$
(We may use the convention $\conv(\varnothing)=\varnothing$ for completeness.)

\begin{definition}[Symmetric admissible speed]\label{def:admissible}
A   symmetric speed vector $\alpha\in \R^V$ is called \emph{$\theta$-admissible} if the following condition holds for every facet $F$ of $P$.
\begin{itemize}[leftmargin=2.2em]
\item[(1)] If $\theta\notin\lin(F-F)$, then there exists an affine function $\ell_F:\aff(F)\to\R$ such that $ \ell_F(x_i)=\alpha_i$ for every vertex $x_i$ of $F$.
\item[(2)] If $\theta\in\lin(F-F)$, no constraint is imposed on the values $\alpha_i$ on the vertices of $F$.
\end{itemize}
\end{definition}

In the parallel case, each moved vertex of $F$ stays in the original facet plane because all displacements are in the two-dimensional vector space $\lin(F-F)$; this is why no planarity constraint is needed for such a facet.
When the direction $\theta$ is fixed or clear from the context, we simply say that $\alpha$ is {\it symmetric admissible}.

It is simple to see that the set  of all  symmetric $\theta$-admissible speeds forms a linear space,  denoted by $A_\theta^{\rm s}(P)$,
and contains the following symmetric and globally affine speeds as a subspace.

\begin{definition}[Trivial speeds]\label{def:trivial}
A   symmetric speed is called \emph{trivial} if there exists $w\in\R^3$ such that
\[
        \alpha_i=w\cdot x_i \qquad (1\leq i\leq V).
\]
\end{definition}
Such speeds are exactly the infinitesimal speeds on the vertex set produced by affine shears
\[
        A_t(x)=x+t(w\cdot x)\theta.
\]
Every symmetric and globally affine speed is $\theta$-admissible for every direction $\theta$: on a non-parallel facet one restricts the affine function $x\mapsto w\cdot x$ to the facet plane, while on a parallel facet there is no condition. The space of symmetric and globally affine speeds has dimension exactly $3$. Indeed, the space of symmetric affine functions on $\R^3$ has dimension $3$, and evaluation on the vertex set is injective because a full-dimensional polytope has vertices that affinely span $\R^3$.

 \begin{remark}[Non-trivial speeds]\label{rem:nontrivial}
A \emph{non-trivial}  admissible speed means that it is not  globally affine. The Lemma \ref{lem:local-exclusion} shows that, at an interior minimum of $\VP$ along a volume-affine origin-symmetric shadow system, admissibility forces the speed to be symmetric and globally affine. Therefore, any admissible speed lying outside the three-dimensional globally affine subspace gives a  contradiction to local minimality.
\end{remark}

To prove Lemmas \ref{lem:facet-persistence}, \ref{lem:volume-affine} and
Proposition \ref{prop:face-affine-shadow} for shadow systems with symmetric
admissible speeds, we first recall several definitions introduced in our
previous paper \cite{CLXX-Mahler}.

Throughout, we   denote the  vertices, edges, and facets  of the three-dimensional polytope \(P\)  by 
\[
\mathcal V=\{x_1,\ldots,x_V\},\qquad
\mathcal E=\{E_1,\ldots,E_{n_1}\},\qquad
\mathcal F=\{F_1,\ldots,F_{n_2}\}.
\]
Let
\[
\mathcal I_0=\{1,\ldots,V\},\qquad
\mathcal I_1=\{1,\ldots,n_1\},\qquad
\mathcal I_2=\{1,\ldots,n_2\}
\]
be the corresponding sets of labels. 
\begin{definition}[Face lattice] \label{def-fll}
    The {\it face lattice}  (or labeled  face lattice)   of \(P\),
denoted by \(\mathcal S(P)\), is encoded by the index sets
$\mathcal I_0, \mathcal I_1,  \mathcal I_2$
together with the set-valued incidence maps
\[\mathcal I_0
\ \xrightarrow{\ \Phi_{1}\ }\
2^{\mathcal I_1},
\qquad
\mathcal I_1
\ \xrightarrow{\ \Phi_{2}\ }\
2^{\mathcal I_2},
\]
where
\[
\Phi_{1}(i)
=
\{j\in\mathcal I_1: x_i\in E_j\},\qquad \Phi_{2}(j)
=
\{k\in\mathcal I_2: E_j\subset F_k\}.
\]
\end{definition}
The above notations $\Phi_1$ and $\Phi_2$ induce the following set-valued incidence map
\[
\Phi_{0}(k)
=
\left\{
i\in \mathcal I_0:
\text{ there exists } j\in\mathcal I_1
\text{ such that }
j\in\Phi_1(i)
\text{ and }
k\in\Phi_2(j)
\right\}.
\]
Then, the vertex-facet incidence reads
\[
x_i\in F_k
\quad\Longleftrightarrow\quad
i\in \Phi_{0}(k).
\]
\begin{definition}[Persistence of the face lattice]
    We say that a   family   \(\{P_t\}_{t\in [-c,c]}\) of polytopes with $P_0=P$
\emph{preserves the  face lattice} if 
\[
\mathcal S(P_t)=\mathcal S(P) \qquad {\rm for~all~} t.
\]
More precisely, $\mathcal S(P_t)= \mathcal S(P)$ means that the
vertices, edges, and facets of \(P_t\) can be labeled as
\[
\{x_i(t):i\in\mathcal I_0\},\qquad
\{E_j(t):j\in\mathcal I_1\},\qquad
\{F_k(t):k\in\mathcal I_2\},
\]
where \(x_i(0)=x_i\), \(E_j(0)=E_j\), and \(F_k(0)=F_k\), such that
\[
x_i(t)\in E_j(t)
\quad\Longleftrightarrow\quad
x_i\in E_j \quad\Longleftrightarrow\quad j\in \Phi_1(i),
\]
and
\[
E_j(t)\subset F_k(t)
\quad\Longleftrightarrow\quad
E_j\subset F_k \quad\Longleftrightarrow\quad k\in \Phi_2(j),
\]
for all \(i\in\mathcal I_0\), \(j\in\mathcal I_1\), and
\(k\in\mathcal I_2\).  
\end{definition}

The next three results, concerning the persistence of the face lattice and the volume-affine property of admissible shadow systems, are not new: they are either results proved in the previous paper or immediate special cases of them. We record them here in the present notation for convenience.

\begin{lemma}[Persistence of the face lattice]\label{lem:facet-persistence}
Let \(P=-P\subset\mathbb R^3\) be an origin-symmetric convex polytope with vertices
\(x_1,\ldots,x_V\).  Put
\[
        x_i(t)=x_i+t\alpha_i\theta,
        \qquad
        P_t=\operatorname{conv}\{x_i(t):1\le i\le V\}.
\]
Assume that the centrally symmetric speed vector \(\alpha\in \mathbb{R}^V\) is \(\theta\)-admissible.  Then, there exists \(c>0\) such that 
$\{P_t\}_{t\in[-c,c]}$ is a continuous origin-symmetric shadow system that preserves the  face lattice. 
\end{lemma}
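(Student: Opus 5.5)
The proof has two parts: symmetry, and persistence of the face lattice. The first is immediate, and for the second I would follow the corresponding lemma of \cite{CLXX-Mahler}.

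\emph{Symmetry.} If $x_j=-x_i$, then $\alpha_j=-\alpha_i$. Hence
\[
x_j(t)=-x_i-t\alpha_i\theta=-x_i(t).
\]
So the vertex set of $P_t$ is symmetric, and $P_t=-P_t$. Continuity of $t\mapsto P_t$ in the Hausdorff metric is clear. Since $P_t$ is the convex hull of $x_i+t\alpha_i\theta$, it is a shadow system along $\theta$.

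\emph{Face lattice.} For each facet $F_k$, pick an outer normal $u_k$ and height $h_k>0$ with $F_k=P\cap\{u_k\cdot x=h_k\}$. I would construct moving planes $H_k(t)$ containing all $x_i(t)$ with $i\in\Phi_0(k)$, and check that every other vertex lies strictly on the inner side for small $|t|$.

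If $\theta\in\lin(F_k-F_k)$, take $H_k(t)=H_k$. Each moved vertex $x_i+t\alpha_i\theta$ stays in $\aff F_k$ because $u_k\cdot\theta=0$.

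If $\theta\notin\lin(F_k-F_k)$, then $u_k\cdot\theta\neq0$. Extend the affine function $\ell_F$ from Definition~\ref{def:admissible} to $\R^3$ by making it constant along $\theta$. The map $\Psi_t(x)=x+t\ell_F(x)\theta$ is affine and invertible for small $t$. The points $x_i(t)=\Psi_t(x_i)$, $i\in\Phi_0(k)$, lie in the plane $H_k(t)=\Psi_t(H_k)$. Its normal $u_k(t)$ and height $h_k(t)$ depend continuously (indeed affinely, after normalization) on $t$, with $u_k(0)=u_k$ and $h_k(0)=h_k$.

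In both cases, every vertex $x_i\notin F_k$ satisfies $u_k\cdot x_i<h_k$ strictly. There are finitely many vertices and facets, so by continuity there is $c>0$ such that for $|t|\le c$,
\[
u_k(t)\cdot x_i(t)<h_k(t)\quad\text{for all } i\notin\Phi_0(k),\qquad u_k(t)\cdot x_i(t)=h_k(t)\quad\text{for all } i\in\Phi_0(k).
\]
Thus each $H_k(t)$ supports $P_t$, and $P_t\cap H_k(t)=\conv\{x_i(t):i\in\Phi_0(k)\}=:F_k(t)$.

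For small $t$, this moved vertex set is still in convex position within the plane, with the same cyclic order. This holds because $\Psi_t$ is affine in the non-parallel case. In the parallel case it follows from a small perturbation: strict convexity of the vertex polygon is an open condition. Hence $F_k(t)$ is a two-dimensional polygon whose vertices are exactly $x_i(t)$, $i\in\Phi_0(k)$, and whose edges $E_j(t)$ correspond to the edges $E_j\subset F_k$.

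Next, I would show that these are all the facets of $P_t$. The facets $F_k(t)$ cover the boundary: their union is a closed polyhedral surface combinatorially identical to $\partial P$, every point of which lies in $\partial P_t$. A closed surface embedded in the sphere $\partial P_t$ must equal it. So $P_t=\bigcap_k\{u_k(t)\cdot x\le h_k(t)\}$, and there are no other facets. The facets $F_k(t)$ are distinct because the normals $u_k(t)$ remain distinct for small $t$. Each $x_i(t)$ is a vertex of $P_t$, being a vertex of some facet polygon and lying on at least three facets with independent normals. The edges are the intersections of adjacent facets. It follows that all incidences $\Phi_1$ and $\Phi_2$ are preserved.

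\emph{Main obstacle.} The step I expect to be hardest is the parallel-facet case. There the vertices of $F_k$ move independently within the facet plane, so one must verify that they stay in convex position and that the neighboring non-parallel facets still meet them correctly. I would handle this by the openness of strict inequalities: strict convexity of each facet polygon, and strict separation of every vertex from non-incident facet planes. Choosing $c$ below the minimum of finitely many positive margins should settle it.
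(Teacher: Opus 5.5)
Your proposal is correct and follows essentially the paper's route. The paper gives no separate argument: it treats face-lattice persistence as a special case of the corresponding lemma of \cite{CLXX-Mahler}, since a symmetric admissible speed is in particular admissible there. Its only new observation is that odd speeds give $x_j(t)=-x_i(t)$, which is exactly your symmetry paragraph. Your moving-plane construction supplies a sound self-contained version of the step the paper outsources: planes $\Psi_t(H_k)$ for non-parallel facets, fixed planes for parallel ones, strict separation of non-incident vertices via finitely many positive margins, and the covering argument excluding new facets.
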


\begin{lemma} \label{lem:volume-affine}
If there exists $c>0$ such that the family of shadow system 
\[
        P_t=\conv\{x_i+t\alpha_i\theta: 1\leq i\leq V\}
\]
preserves the face lattice, for all \(t \in [-c,c]\), then
\[
        t\longmapsto |P_t|
\]
is affine on \( [-c,c]\).
\end{lemma}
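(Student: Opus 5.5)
The plan is to decompose $|P_t|$ into cones over facets from a fixed apex and to show that each cone volume is affine in $t$. Since the face lattice is preserved, every $P_t$ is the union of the pyramids $\conv(\{0\}\cup F_k(t))$, $k\in\mathcal I_2$. Because $P=-P$ and $\alpha$ is symmetric (as in the setting of \cref{lem:facet-persistence}), $P_t$ is origin-symmetric. For small $|t|$, $P_t$ is a small perturbation of $P$, so the origin lies in its interior. Hence
\[
|P_t|=\sum_{k\in\mathcal I_2}\tfrac13\, h_{P_t}(u_k(t))\,|F_k(t)|,
\]
with $F_k(t)=\conv\{x_i(t):i\in\Phi_0(k)\}$; this is guaranteed by the labeled persistence of the face lattice. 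If one prefers to avoid the interior condition, the same decomposition works with signed volumes of the tetrahedra $\conv\{0,a,b,c\}$ in a triangulation.

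Concretely, I would triangulate each facet $F_k$ by a fan from one of its vertices, using only its vertices $x_i$ with $i\in\Phi_0(k)$. Since the face lattice is preserved, the moved points $x_i(t)$, $i\in\Phi_0(k)$, remain the vertices of the planar convex polygon $F_k(t)$, and they keep the same cyclic order. The last point holds because edge incidences are preserved: consecutive vertices of $F_k$ span edges $E_j$ with $k\in\Phi_2(j)$, and the same pairs span edges of $F_k(t)$. Therefore the same fan triangulates $F_k(t)$, and
\[
|P_t|=\sum_{k}\sum_{\triangle=(a,b,c)\subset F_k}\tfrac16\det\bigl(x_a(t),x_b(t),x_c(t)\bigr),
\]
with orientations chosen outward. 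The orientation stays consistent for small $t$ by continuity, because the determinant of a nondegenerate triangle with the origin does not change sign.

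The key step is that each determinant $\det(x_a+t\alpha_a\theta,\,x_b+t\alpha_b\theta,\,x_c+t\alpha_c\theta)$ is affine in $t$. Expanding by multilinearity, any term containing two or more copies of $\theta$ vanishes, because it has two parallel columns. Hence the determinant equals
\[
\det(x_a,x_b,x_c)+t\bigl[\alpha_a\det(\theta,x_b,x_c)+\alpha_b\det(x_a,\theta,x_c)+\alpha_c\det(x_a,x_b,\theta)\bigr],
\]
which is affine. Summing these finitely many affine functions gives that $t\mapsto|P_t|$ is affine on $[-c,c]$.

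The main obstacle is justifying that one fixed combinatorial decomposition computes $|P_t|$ for every $t\in[-c,c]$, not only for $t$ near $0$. Preservation of the labeled face lattice on the whole interval supplies this. The facets $F_k(t)$ are exactly the convex hulls of the prescribed labeled vertices, so the fan triangles built from the fixed cyclic order tile $\partial P_t$ for every $t$. The signed-volume formula $\tfrac16\sum\det$ over an oriented triangulation of the boundary surface is valid for any apex, so interiority of the origin is not needed. Orientation is preserved because the outward normals vary continuously and no facet degenerates on $[-c,c]$.
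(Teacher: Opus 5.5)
Your proof is correct. The paper gives no argument of its own for this lemma: it is recorded as a known result from \cite{CLXX-Mahler}. So your write-up is a self-contained replacement for that citation, not a variant of an in-text proof.

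Your argument shows that the lemma needs only two ingredients:
\begin{enumerate}
\item a fixed labeled decomposition of $\partial P_t$ into the polygons $F_k(t)=\conv\{x_i(t):i\in\Phi_0(k)\}$ with fixed cyclic vertex order. This is what labeled face-lattice persistence provides, provided the labels are read as the moving points $x_i(t)=x_i+t\alpha_i\theta$, which is the natural reading of the definition.
\item the fact that all displacements are parallel to the single vector $\theta$. By multilinearity, every term of order $\geq 2$ in $t$ in $\det\bigl(x_a(t),x_b(t),x_c(t)\bigr)$ then vanishes.
\end{enumerate}
In particular, neither symmetry of $\alpha$ nor admissibility is used, which matches the hypotheses as stated.

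One small point concerns orientation. Your remark that the triangles stay outward oriented ``for small $t$'' because the determinant with the origin does not change sign is not the right justification on the whole interval. In the general signed setting the origin may meet a facet plane, where that determinant vanishes. The argument in your last paragraph is the correct one, and it can be made precise as follows. For a vertex $x_j$ not in $F_k$, the function
\[
t\mapsto \bigl(x_j(t)-x_a(t)\bigr)\cdot\bigl((x_b(t)-x_a(t))\times(x_c(t)-x_a(t))\bigr)
\]
is continuous and never zero on $[-c,c]$. It is nonzero because the fan triangle is nondegenerate and, by preserved vertex--facet incidence, $x_j(t)\notin\aff F_k(t)$. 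Since $[-c,c]$ is connected, it has constant sign. With this, the formula $|P_t|=\tfrac16\sum\det$ over the fixed oriented fan triangulation holds for every $t\in[-c,c]$, with no interiority assumption on the origin.
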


\begin{proposition}\label{prop:face-affine-shadow}
Let $P=-P\subset\R^3$ be a three-dimensional origin-symmetric convex polytope. If $\alpha$ is a centrally symmetric $\theta$-admissible speed, then there is $c>0$ such that $(P_t)_{t\in[-c,c]}$ is a non-degenerate origin-symmetric shadow system, has the same face lattice as $P$, and satisfies that $t\mapsto |P_t|$ is affine on $[-c,c]$.
\end{proposition}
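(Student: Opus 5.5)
The proposition is essentially a packaging of Lemma~\ref{lem:facet-persistence} and Lemma~\ref{lem:volume-affine}, together with the non-degeneracy needed to feed the family into Theorem~\ref{thm:shadow-input}. The plan is to invoke these in order, shrinking $c$ where necessary.

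\emph{Step 1: face lattice and symmetry.} Since $\alpha$ is a centrally symmetric $\theta$-admissible speed, Lemma~\ref{lem:facet-persistence} gives some $c_1>0$ such that $\{P_t\}_{t\in[-c_1,c_1]}$ is a continuous origin-symmetric shadow system with $\mathcal S(P_t)=\mathcal S(P)$ for all $|t|\le c_1$.

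It is worth recording directly why symmetry holds. If $x_j=-x_i$, then $\alpha_j=-\alpha_i$. Hence $x_j(t)=-x_i-t\alpha_i\theta=-x_i(t)$, so the vertex set of $P_t$ is symmetric and $P_t=-P_t$.

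\emph{Step 2: non-degeneracy.} Since $P$ is three-dimensional and origin-symmetric, $0\in\operatorname{int}P$. Pick four affinely independent vertices $x_{i_0},\dots,x_{i_3}$ of $P$. The determinant $\det\bigl(x_{i_1}(t)-x_{i_0}(t),\,x_{i_2}(t)-x_{i_0}(t),\,x_{i_3}(t)-x_{i_0}(t)\bigr)$ is a polynomial in $t$ that is nonzero at $t=0$. Choose $c_2\in(0,c_1]$ so that it stays nonzero on $[-c_2,c_2]$. Then each $P_t$ contains a nondegenerate simplex and so has nonempty interior. Being symmetric, it also contains $0$ in its interior.

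Persistence of the face lattice already says $P_t$ has the same three-dimensional combinatorial type. Still, this explicit argument makes non-degeneracy independent of how Lemma~\ref{lem:facet-persistence} is phrased.

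\emph{Step 3: affine volume.} With the face lattice preserved on $[-c_2,c_2]$, Lemma~\ref{lem:volume-affine} applies directly and gives that $t\mapsto|P_t|$ is affine there. Setting $c=c_2$ gives all three conclusions at once.

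If one wanted to see why Lemma~\ref{lem:volume-affine} holds rather than cite it, the argument would go as follows:
\begin{itemize}
\item Triangulate $P_t$ as a cone from the origin over triangulated facets, using the fixed combinatorics.
\item Write $|P_t|$ as a sum of determinants $\tfrac16\det(0,a(t),b(t),c(t))$, where $a(t),b(t),c(t)$ are vertices of a single facet $F(t)$.
\item Observe that all vertices move only in the direction $\theta$, so each such determinant is linear in $t$. Expanding multilinearly, any term with two or more factors of $\theta$ vanishes.
\end{itemize}

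\emph{Main obstacle.} The only delicate point is this: the triangulation must remain a valid triangulation of $P_t$, and each facet $F_k(t)$ must remain planar. For non-parallel facets, planarity follows from admissibility, since $\ell_F$ affine makes $F+t\ell_F\theta$ the image of $F$ under an affine map. For parallel facets, planarity holds because the motion stays in $\aff F$. Both facts are supplied by Lemma~\ref{lem:facet-persistence}, so nothing new needs to be proved here.
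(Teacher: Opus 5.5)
Your proposal is correct and takes essentially the paper's route: the paper gives no separate proof of this proposition and records it as an import from \cite{CLXX-Mahler}, namely the combination of Lemma~\ref{lem:facet-persistence} and Lemma~\ref{lem:volume-affine}, which is exactly your Steps~1 and~3. Your explicit determinant argument for non-degeneracy and your cone-decomposition sketch of the affine volume are sound additions. In that sketch, each signed term $\det(a(t),b(t),c(t))$, taken with the combinatorially fixed outward orientation, is affine in $t$ (not linear) because every term containing two copies of $\theta$ vanishes.
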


\section{Minimizers admit only trivial symmetric admissible shadow systems}\label{sec:local-exclusion}
For convenience, we call \(\{P_t\}_{t\in[-c,c]}\) a
\emph{symmetric admissible shadow system} along the direction
\(\theta\in \sn\) if the speed vector
\(\alpha\in\mathbb R^V\) is symmetric and admissible. Such a system, with its underlying direction \(\theta\)
understood, is called non-trivial or trivial, if its speed vector is non-trivial or trivial, respectively.

The main purpose of this section is to establish
Lemma \ref{lem:minimizer-exclusion}; 
for this, we first need the following local version.

\begin{lemma}\label{lem:local-exclusion}
Let
\[
        P_t=\conv\{x_i+t\alpha_i\theta:1\leq i\leq V\},
        \qquad t\in[-c,c],
\]
be a symmetric admissible shadow system, where $c>0$ is sufficiently small so that Proposition \ref{prop:face-affine-shadow} applies.  If
\[
        \VP(P_0)=\min_{t\in[-c,c]}\VP(P_t),
\]
then $\alpha$ is trivial; that is, there exists $w\in \mathbb{R}^3$ such that $$\alpha_i=\alpha(x_i)=w\cdot x_i\qquad (1\leq i\leq V). $$
\end{lemma}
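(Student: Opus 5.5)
By Proposition~\ref{prop:face-affine-shadow}, after possibly shrinking $c$, the family $(P_t)_{t\in[-c,c]}$ is a non-degenerate, origin-symmetric shadow system along $\theta$. It keeps the face lattice of $P$, and $t\mapsto|P_t|$ is affine. Since each $P_t=-P_t$, its Santal\'o point is the origin, so $\VP(P_t)=|P_t|\,|P_t^\circ|$ agrees with the general Santal\'o volume product appearing in Theorem~\ref{thm:shadow-input}. All hypotheses of Corollary~\ref{cor:one-sided} therefore hold with $L_t=P_t$. Because $\VP(P_0)$ is the minimum over $[-c,c]$, Lemma~\ref{lem:vp-constant-interval} shows that $\VP(P_t)$ is constant on all of $[-c,c]$.

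Next I would apply the rigidity part of Theorem~\ref{thm:shadow-input}. The volume is affine and $\VP$ is constant, so there are $w\in\R^3$ and $\beta\in\R$ with
\[
P_t=A_t(P), \qquad A_t(x)=x+t(w\cdot x+\beta)\theta, \qquad t\in[-c,c].
\]
Two things remain: show that $\beta=0$, and identify $\alpha_i$ with $w\cdot x_i$.

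For $\beta$, I would use symmetry. Both $P_t$ and $P$ are origin-symmetric, so $A_t(P)=-A_t(P)=A_t'(P)$, where $A_t'(x)=-A_t(-x)=x+t(w\cdot x-\beta)\theta$. Hence the translation $A_t'\circ A_t^{-1}$, which moves by $-2t\beta\theta$, maps $P_t$ onto itself. A bounded body invariant under a non-zero translation cannot exist, so $\beta=0$ (taking any $t\neq0$).

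For the identification, $A_t$ is an affine bijection for small $t$, so it maps the vertices of $P$ bijectively onto the vertices of $P_t$. By the face-lattice preservation in Lemma~\ref{lem:facet-persistence}, the vertices of $P_t$ are exactly the points $x_i(t)=x_i+t\alpha_i\theta$, and they are distinct. This gives a permutation $\sigma_t$ of $\{1,\dots,V\}$ with $x_i+t\alpha_i\theta=x_{\sigma_t(i)}+t(w\cdot x_{\sigma_t(i)})\theta$.

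The main obstacle is ruling out a non-identity $\sigma_t$. I would handle it by continuity. Both $t\mapsto x_i(t)$ and $t\mapsto A_t(x_j)$ are continuous, and the vertices $x_j$ are distinct. So for $|t|$ small enough, $x_i(t)$ lies closer to $x_i$ than to any other $x_j$, and $A_t(x_j)$ lies close to $x_j$; this forces $\sigma_t(i)=i$. The displayed identity then reads $t\alpha_i\theta=t(w\cdot x_i)\theta$. Taking any small $t\neq0$ gives $\alpha_i=w\cdot x_i$ for all $i$, so $\alpha$ is trivial.
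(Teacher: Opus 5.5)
Your argument is correct, but it is organized differently from the paper's. The paper re-derives nothing. It cites the non-symmetric local exclusion lemma \cite[Lemma~4.1]{CLXX-Mahler}, which already gives a globally affine speed $\alpha_i=w\cdot x_i+\beta$. It then uses only the oddness of $\alpha$ on an antipodal pair $x_j=-x_i$:
\[
-w\cdot x_i+\beta=\alpha_j=-\alpha_i=-w\cdot x_i-\beta,
\]
so $\beta=0$.

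You instead unpack the cited lemma using only the tools stated in Section~\ref{sec:tools}:
\begin{itemize}
\item You check that each $P_t$ has Santal\'o point $0$, so the volume product in Theorem~\ref{thm:shadow-input} is $|P_t|\,|P_t^\circ|$. The paper leaves this implicit.
\item You get constancy of $\VP(P_t)$ from Lemma~\ref{lem:vp-constant-interval}.
\item You apply the rigidity part of Theorem~\ref{thm:shadow-input} to get $P_t=A_t(P)$.
\item You match vertex labels by continuity.
\end{itemize}
This makes the proof self-contained relative to the present paper, at the cost of length. The paper's version is a two-line reduction, but it rests entirely on the external lemma.

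Your geometric elimination of $\beta$ is valid: $P_t$ would be invariant under translation by $2t\beta\theta$; equivalently, the centroid of $A_t(P)$ is $A_t(0)=t\beta\theta$, while $P_t=-P_t$ has centroid $0$. It is, however, unnecessary. Your identification step already yields $\alpha_i=w\cdot x_i+\beta$ without knowing $\beta=0$, and the paper's antipodal-pair computation then finishes in one line.

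A minor point: you do not need face-lattice preservation to know that, for small $|t|$, the vertices of $P_t$ are exactly the distinct points $x_i(t)$. Each $x_i$ is exposed by some $u$ with $u\cdot x_i>u\cdot x_j$ for $j\neq i$, and this strict inequality survives small perturbations.
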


%\begin{proof} By Proposition \ref{prop:face-affine-shadow}, the family \((P_t)_{t\in[-c,c]}\) is a non-degenerate volume-affine shadow system and preserves the face lattice. Since \(P_0\) minimizes the volume product along this interval, the local exclusion lemma for admissible shadow systems implies that the speed is globally affine. Hence there exist \(w\in\mathbb R^3\) and \(\beta\in\mathbb R\) such that \[         \alpha_i=w\cdot x_i+\beta\qquad (1\leq i\leq V).\] Using the symmetry of the labelling, we have \[\alpha_{\bar i}=-\alpha_i,\qquad x_{\bar i}=-x_i.\] Therefore \[       -w\cdot x_i+\beta        =\alpha_{\bar i} =-\alpha_i        =-w\cdot x_i-\beta .\] Thus \(\beta=0\). Consequently \[        \alpha_i=w\cdot x_i\] for all \(i\), as claimed. \end{proof}

\begin{proof}
   The above lemma follows directly from \cite[Lemma~4.1]{CLXX-Mahler}. The only
additional observation needed in the present symmetric setting is that the
admissible speed is symmetric: for every pair of opposite vertices \(x_i\) and
\(-x_i\),
\[
        \alpha(-x_i)=-\alpha(x_i).
\]
%Thus the conclusion of \cite[Lemma~4.1]{CLXX-Mahler} applies in the symmetric
setting and gives the desired statement. 
\end{proof}

Once we have Lemma \ref{lem:local-exclusion}, combined with Proposition \ref{prop:face-affine-shadow} we can obtain that a minimizer of $\mathcal P$ on $\mathcal C^s_N$ can admit only  trivial admissible shadow system.

\begin{lemma}\label{lem:minimizer-exclusion}
Fix $N\geq6$, and let $Q\in\mathcal C_N^{\rm s}$ minimize $\VP$ over $\mathcal C_N^{\rm s}$.  Then neither $Q$ nor its polar $Q^\circ$ admits a non-trivial symmetric admissible shadow system along any direction $\theta\in\sn$.
\end{lemma}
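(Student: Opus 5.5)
For $Q$ itself, I will argue by contradiction and reduce to Lemma~\ref{lem:local-exclusion}. Suppose $Q$ admits a non-trivial symmetric $\theta$-admissible speed $\alpha$. By Proposition~\ref{prop:face-affine-shadow}, there is $c>0$ such that $(Q_t)_{t\in[-c,c]}$ has the following properties: it is a non-degenerate origin-symmetric shadow system, it has the same face lattice as $Q$, and $t\mapsto|Q_t|$ is affine.

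The first key step is to check that each $Q_t$ stays in the admissible class $\mathcal C_N^{\rm s}$. Origin-symmetry holds because $\alpha$ is symmetric. Three-dimensionality holds by non-degeneracy. Since the face lattice is preserved, $Q_t$ has exactly the $V\le N$ vertices $x_i(t)$. Hence $Q_t\in\mathcal C_N^{\rm s}$ for all $t\in[-c,c]$. Minimality of $Q$ on the whole class then gives $\VP(Q_0)=\min_{t\in[-c,c]}\VP(Q_t)$. Lemma~\ref{lem:local-exclusion} forces $\alpha$ to be trivial, which is a contradiction.

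For the polar, I first show that $Q^\circ$ is itself a minimizer of $\VP$, but over a possibly different class. By Lemma~\ref{lem:polarity-face-lattice}, $Q^\circ$ is an origin-symmetric polytope with $V(Q^\circ)=F(Q)$ vertices. Set $M=F(Q)$; note $M\ge 6$ since $Q$ is centrally symmetric. I claim $Q^\circ$ minimizes $\VP$ over the restricted class $\mathcal D$ of origin-symmetric polytopes $R$ with $V(R)\le M$ and $F(R)\le V(Q)$. Indeed, if $R\in\mathcal D$, then $R^\circ$ has at most $V(Q)\le N$ vertices, so $R^\circ\in\mathcal C_N^{\rm s}$. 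Therefore $\VP(R)=\VP(R^\circ)\ge\VP(Q)=\VP(Q^\circ)$. This handles the minimality of $Q^\circ$ over $\mathcal D$.

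Now suppose $Q^\circ$ admits a non-trivial symmetric admissible system $(Q^\circ)_t$. By Proposition~\ref{prop:face-affine-shadow}, for small $c$ this family preserves the face lattice of $Q^\circ$. So each $(Q^\circ)_t$ has exactly $M$ vertices and $V(Q)$ facets, and hence lies in $\mathcal D$. It is origin-symmetric, non-degenerate and volume-affine. The minimality over $\mathcal D$ again gives $\VP((Q^\circ)_0)=\min_t\VP((Q^\circ)_t)$. Lemma~\ref{lem:local-exclusion}, applied with $P=Q^\circ$, then makes the speed trivial, which is a contradiction.

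I expect the main obstacle to be this polar step. The issue is that the polars of the deformed bodies $(Q^\circ)_t$ are not shadow systems, so we cannot deform $Q$ directly. The point of the argument is that we never need them to be. Face-lattice persistence controls the facet count of $(Q^\circ)_t$, and hence the vertex count of its polar. Combined with $\VP(R)=\VP(R^\circ)$, this is what transfers the minimality from $Q$ to $Q^\circ$.

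A subsidiary point to verify is that Lemma~\ref{lem:local-exclusion} only uses minimality along the given interval, not minimality over some class $\mathcal C_N^{\rm s}$. Its statement confirms this, so it applies verbatim to $Q^\circ$.
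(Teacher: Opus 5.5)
Your proof is correct and follows the paper's argument. The first part is identical, and for the polar the paper likewise uses face-lattice persistence to get $V(L_t^\circ)=F(L_t)=F(Q^\circ)=V(Q)\le N$, hence $L_t^\circ\in\mathcal C_N^{\rm s}$ and $\VP(L_t)=\VP(L_t^\circ)\ge\VP(Q)=\VP(Q^\circ)$; your auxiliary class $\mathcal D$ just repackages this step, and its constraint $V(R)\le M$ is superfluous.
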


\begin{proof}
First suppose that $Q$ admits a non-trivial symmetric admissible shadow system $Q_t$. By Proposition \ref{prop:face-affine-shadow}, the face lattice is preserved and $Q_t\in\mathcal C_N^{\rm s}$ for all small $|t|$. Since $Q$ minimizes the volume product, we have 
\[
        \VP(Q_t)\geq\VP(Q)=\VP(Q_0).
\]
Thus $Q_0$ is an interior minimum along this shadow system, and Lemma \ref{lem:local-exclusion} forces the speed to be trivial, a contradiction.

Now set $L=Q^\circ$. Suppose that $L$ admits a non-trivial symmetric admissible shadow system $L_t$. Again shrink the interval so that  Proposition \ref{prop:face-affine-shadow} applies for $L_t$, where $|t|$ is sufficiently small. Let
\[
        M_t=L_t^\circ.
\]
By Lemma \ref{lem:polarity-face-lattice}, we have that
\[
        V(M_t)=F(L_t).
\]
Since the face lattice of $L_t$ is preserved, $F(L_t)=F(L)$. Since $L=Q^\circ$ has the face lattice dual to that of $Q$, $F(L)=V(Q)\leq N.$
Therefore, 
\[ V(M_t)=F(L_t)=F(L)=V(Q)\leq N \] 
and hence $M_t\in\mathcal C_N^{\rm s}$ for all small $t$. Since $Q$ minimizes the volume product, 
\[ \VP(M_t)\geq\VP(Q). \]
Since
\[
        \VP(L_t)=\VP(L_t^\circ)=\VP(M_t),
        \qquad
        \VP(L)=\VP(Q^\circ)=\VP(Q),
\]
we have that
\[
        \VP(L_t)\geq\VP(L)=\VP(L_0)
\]
for all small $t$.  Applying Lemma \ref{lem:local-exclusion} to the system \(L_t\), we conclude
that its speed must be trivial, contradicting the assumption that \(L_t\) is non-trivial.
\end{proof}

\section{A symmetric version of Euler-type inequality}\label{sec:combinatorial}

Let $P$ be a three-dimensional origin-symmetric polytope. The terminology used below, including the labeled face lattice, was introduced
in Section \ref{sec:admissible}.  Moreover, let 
\[
         V(P) = |\V(P)|,\qquad  E(P)=|\E(P)|,\qquad  F(P)=|\F(P)|
\]
denote the numbers of vertices, edges, and facets. 

Let $G\in \F(P)$ be a facet. Write
\[
        I(G)=\{i:x_i\in G\},
        \qquad
        m(G)=\#I(G).
\]
For $i\in\mathcal I_0$, let
\[
        d(x_i)=\#\Phi_1(i)
\]
denote the number of edges incident to $x_i$. 

Note that the facets occur in opposite pairs and $m(G)=m(-G)$. For a direction $\theta\in\sn$, define
\begin{equation}\label{eq:Ctheta-definition}
        C_\theta(P)=
        \frac12 \cdot \sum_{\theta\in\lin(G-G)}(m(G)-3),
\end{equation}
where the sum is over all facets parallel to $\theta$. 

The well-known Euler's formula reads 
\[ V(P)+F(P)-E(P)=2.\]
We shall derive the following Euler-type inequality, which relates the
dimension of the linear space of admissible speeds to the quantities
\(V\), \(F\), and \(C_\theta(P)\).

\begin{lemma}\label{lem:dimension-count}
Let $P=-P\subset\R^3$ be an origin-symmetric polytope.  Then
\begin{equation}\label{eq:dimension-estimate}
        \dim A_\theta^{\rm s}(P)
        \geq
        \frac{F(P)-V(P)}2+2+C_\theta(P).
\end{equation}
Consequently, if the right-hand side of \eqref{eq:dimension-estimate} is greater than $3$, then $P$ admits a non-trivial symmetric admissible shadow system.
\end{lemma}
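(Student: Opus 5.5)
The plan is a linear-algebra count: start from the space of all symmetric speed vectors, subtract an upper bound on the number of independent linear constraints imposed by Definition~\ref{def:admissible}, and then use Euler's formula to rewrite the result.

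\emph{Step 1: the ambient space.} Since $P=-P$ and $0\in\operatorname{int}P$, no vertex is its own opposite, so the vertices split into $V(P)/2$ antipodal pairs $\{x_i,-x_i\}$. A symmetric speed vector is determined by freely choosing one value per pair. So the space of symmetric speed vectors is a linear space of dimension $V(P)/2$, and $A^{\rm s}_\theta(P)$ is the subspace cut out by the planarity conditions (1).

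\emph{Step 2: constraints from one facet.} Fix a facet $G$ with $\theta\notin\lin(G-G)$. Choose three affinely independent vertices of $G$. An affine function on the plane $\aff(G)$ is uniquely determined by its values at these three points. Hence condition (1) for $G$ says exactly that, for each of the remaining $m(G)-3$ vertices, $\alpha$ equals the value of that affine interpolant. These are $m(G)-3$ linear equations in $\alpha$.

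\emph{Step 3: pairing opposite facets.} I claim the conditions for $G$ and for $-G$ are equivalent on symmetric speed vectors. If $\alpha|_{G}=\ell_G$ with $\ell_G$ affine, set $\ell_{-G}(y)=-\ell_G(-y)$. This is affine on $\aff(-G)$, and it agrees with $\alpha$ on the vertices of $-G$ because $\alpha(-x)=-\alpha(x)$. Also, $-G$ is parallel to $\theta$ exactly when $G$ is. Therefore the number of independent constraints is at most
\[
\tfrac12\sum_{\theta\notin\lin(G-G)}(m(G)-3)
=\tfrac12\sum_{G\in\F(P)}(m(G)-3)-C_\theta(P).
\]

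\emph{Step 4: Euler's formula.} Each facet is a polygon, so it has as many vertices as edges, and each edge lies in exactly two facets. Hence $\sum_G m(G)=2E(P)$, and the constraint count is at most $E-\tfrac32F-C_\theta$. This gives
\[
\dim A^{\rm s}_\theta(P)\ \ge\ \frac{V}{2}-E+\frac{3F}{2}+C_\theta .
\]
Substituting $E=V+F-2$ yields
\[
\frac V2-V-F+2+\frac{3F}2+C_\theta=\frac{F-V}2+2+C_\theta,
\]
which is \eqref{eq:dimension-estimate}.

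\emph{Step 5: the consequence.} The trivial speeds form a $3$-dimensional subspace of $A^{\rm s}_\theta(P)$, as computed after Definition~\ref{def:trivial}. If the right-hand side of \eqref{eq:dimension-estimate} exceeds $3$, then $A^{\rm s}_\theta(P)$ contains a non-trivial symmetric admissible speed. Proposition~\ref{prop:face-affine-shadow} then produces the corresponding symmetric admissible shadow system for small $|t|$.

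The only delicate point is Step 3: the halving must be justified by showing that the constraints for $-G$ are implied by those for $G$, rather than just counted. The other steps are routine.
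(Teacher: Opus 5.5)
Your proof is correct and follows the paper's argument essentially step by step. Both count the $V/2$ free values on antipodal vertex pairs, impose at most $m(G)-3$ constraints per non-parallel facet halved over opposite facets, use $\sum_G m(G)=2E$ and Euler's formula, and then compare with the $3$-dimensional trivial subspace. Your Step 3 simply spells out, via $\ell_{-G}(y)=-\ell_G(-y)$, the equivalence of the constraints for $G$ and $-G$ that the paper asserts in one phrase.
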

Regarding its proof,
we would like to proceed the proof in a staightforward way, 
while a more explicit method by using the evaluation maps can be seen in \cite{CLXX-Mahler}. 

\begin{proof}
Write \(V=V(P)\), \(E=E(P)\), and \(F=F(P)\). Since the speeds
\(\alpha\in\R^V\) are symmetric (in fact, they are odd vectors), their values are determined by choosing one
vertex from each opposite pair. Thus the number of independent vertex
variables is \(V/2\).

Let
\[
        \mathcal F_\theta
        =
        \{G\in \mathcal F:\theta\notin \lin(G-G)\}
\]
be the set of facets not parallel to \(\theta\). For each facet \(G\in \mathcal F_\theta\), the number of linear restrictions imposed by \(G\) is at
most \(m(G)-3\), since an affine function on the plane containing \(G\) is
determined by its values at three non-collinear vertices. If instead
\[
        \theta\in \lin(G-G),
\]
then \(G\) imposes no constraint by the definition of admissibility. Since opposite facets give the same restrictions after imposing the symmetry of the speed, we have
\[
        \frac V2 - \dim A_{\theta}^{\rm s}(P)
        \leq
        \frac12\sum_{G\in \F_\theta} \bigl(m(G)-3\bigr).
\]
It follows that
\[
\begin{aligned}
        \dim A_{\theta}^{\rm s}(P)
        &\geq
        \frac V2
        -\frac12\sum_{G\in \F_\theta} \bigl(m(G)-3\bigr)  \\
        &=
        \frac V2+\frac32F-\frac12\sum_{G\in \F}m(G)+C_\theta(P) \\
        &=
        \frac V2+\frac32F-E+C_\theta(P).
\end{aligned}
\]
By Euler's formula, the last expression is equal to
\[
        \frac{F-V}{2}+2+C_\theta(P).
\]
Hence \eqref{eq:dimension-estimate} follows.

Note that the trivial speed space
\[
        \{(w\cdot x_1, \ldots , w\cdot x_V) : w\in\R^3\}
\]
is a three-dimensional subspace of \(A_\theta^{\rm s}(P)\). Hence, if the
lower bound in \eqref{eq:dimension-estimate} is greater than \(3\), then
\(A_\theta^{\rm s}(P)\) contains a symmetric admissible speed which is not
globally affine. Therefore it defines a non-trivial symmetric admissible shadow
system
\[
        P_t=\conv\{x_i+t\alpha_i\theta:1\leq i\leq V\}
\]
for sufficiently small \(|t|\).
\end{proof}

The following lemma implies that a three-dimensional origin-symmetric convex polytope with only trivial symmetric admissible shadow systems must be a parallelepiped or an affine image of an octahedron.
\begin{lemma}\label{lem:combinatorial}
Let $P=-P\subset\R^3$ be an origin-symmetric polytope.  If neither $P$ nor $P^\circ$ admits a non-trivial symmetric admissible shadow system, then $P$ is a parallelepiped or an affine image of an octahedron.
\end{lemma}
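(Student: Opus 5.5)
We apply Lemma~\ref{lem:dimension-count} to both $P$ and $P^\circ$ and combine the two resulting inequalities. By hypothesis neither body admits a non-trivial symmetric admissible shadow system, so for every $\theta\in\sn$ the right-hand side of \eqref{eq:dimension-estimate} is at most $3$, both for $P$ and for $P^\circ$. Write $V=V(P)$ and $F=F(P)$. By Lemma~\ref{lem:polarity-face-lattice} we have $V(P^\circ)=F$ and $F(P^\circ)=V$. Since $C_\theta\geq 0$ (every facet has at least three vertices), we obtain
\[
\frac{F-V}{2}+2+C_\theta(P)\leq 3,\qquad \frac{V-F}{2}+2+C_\theta(P^\circ)\leq 3 .
\]
Hence $|F-V|\leq 2$. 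Because $P$ is origin-symmetric, $V$ and $F$ are both even, so $F-V\in\{-2,0,2\}$.

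Next we exploit the freedom in $\theta$. We choose $\theta$ parallel to a facet $G$ of $P$, so that $G$ and $-G$ contribute to $C_\theta(P)$, which gives $C_\theta(P)\geq m(G)-3$. Better still, we take $\theta$ parallel to an edge $e$. Then both facets containing $e$, together with their opposites, are parallel to $\theta$, and so $C_\theta(P)\geq (m(G_1)-3)+(m(G_2)-3)$, where $G_1,G_2$ are the two facets through $e$. Feeding these bounds into the first inequality yields $m(G_1)+m(G_2)-6\leq 1-\frac{F-V}{2}$, and similarly for $P^\circ$, where facet sizes correspond to vertex degrees $d(x_i)$ of $P$. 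We then split into cases.

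\emph{Case $F-V=2$.} Then every pair of adjacent facets has $m(G_1)+m(G_2)\leq 6$, so all facets are triangles and $P$ is simplicial. Simpliciality gives $2E=3F$, and combining this with Euler's formula and $F=V+2$ yields $V=6$, $F=8$. The only simplicial centrally symmetric polytope with $6$ vertices is combinatorially the octahedron. Since its vertices are three opposite pairs $\pm x_1,\pm x_2,\pm x_3$ with $x_1,x_2,x_3$ linearly independent, it is the linear image of the standard octahedron.

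\emph{Case $F-V=-2$.} This is the dual situation: applying the edge bound to $P^\circ$ shows that $P$ is simple with $V=8$, $F=6$, so $P$ is a centrally symmetric hexahedron with three pairs of parallel quadrilateral facets, i.e.\ a parallelepiped.

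\emph{Case $F=V$.} The edge bound gives $m(G_1)+m(G_2)\leq 7$ for adjacent facets, and likewise $d(x)+d(y)\leq 7$ for adjacent vertices. In particular, every quadrilateral or larger facet is surrounded only by triangles, and every vertex of degree $\geq 4$ has only degree-$3$ neighbours. We derive a contradiction by double counting. Writing $2E=\sum_G m(G)=\sum_x d(x)$ with $V=F$ and Euler's formula $E=2V-2$, the average facet size and the average degree both equal $4-4/V$. Since triangles and degree-$3$ vertices must be adjacent to larger ones in a controlled way, we aim to show these averages cannot be attained. Concretely, count edge--facet incidences between triangles and larger faces, using that each edge of a non-triangle borders a triangle.

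The main obstacle is this last case. The counting must be sharp, possibly requiring a better choice of $\theta$ (one parallel to two non-adjacent facets, e.g.\ through a vertex direction) to increase $C_\theta$. A cleaner first attempt is to pick $\theta$ along an edge of a largest facet $G$, so that $C_\theta\geq m(G)-3+\text{(neighbour)}$, and then show that $F=V$ forces some facet or vertex degree $\geq 4$ adjacent to another non-minimal one, contradicting the bound. Such a configuration should exist for symmetric polytopes with $F=V$; small cases like $V=F=8$ or $10$ would be checked separately.
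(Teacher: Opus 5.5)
\textbf{There is a genuine gap: the case $F=V$ is not proved.} Your cases $F-V=\pm 2$ are correct. Your direct treatment of $F-V=2$, via $C_\theta(P)\le 0$ for $\theta$ parallel to any facet, is a harmless variant of the paper's route, which proves the simple case and then dualizes. For $F=V$, however, you only outline a double-counting strategy and call it the ``main obstacle''. You then speculate about better choices of $\theta$ and separate checks of $V=F=8,10$. As written, the lemma is therefore not established.

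\textbf{The missing step.} None of those extra devices is needed: the edge bound you already derived closes the case. Every facet has a neighbour with at least $3$ vertices, so $m(G_1)+m(G_2)\le 7$ forces every facet to be a triangle or a quadrilateral, with no two quadrilaterals sharing an edge. The vertex-degree bound for $P^\circ$ is never needed. Let $p$ be the number of triangular facets and $q$ the number of quadrilateral facets. Then $p+q=F=V$, and Euler's formula gives $3p+4q=2E=4V-4$. Together these force $p=4$ exactly; your ``average facet size $4-4/V$'' observation encodes precisely this.

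\textbf{The count that finishes it.} Each edge lies in exactly two facets. Hence every edge of a quadrilateral borders a triangle, and each triangle edge borders at most one quadrilateral, so $4q\le 3p=12$. Opposite facets have the same number of vertices, so $q$ is even, and therefore $q\le 2$ and $V=p+q\le 6$. An origin-symmetric $3$-polytope has at least three opposite vertex pairs, so $V=F=6$. But then $P=\conv\{\pm x_1,\pm x_2,\pm x_3\}$ is a linear image of the octahedron, which has $8$ facets, contradicting $F=6$. This is exactly how the paper disposes of Case~3.
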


\begin{proof}
Since $C_\theta(P)\geq 0$ and $P$ admits no non-trivial symmetric admissible shadow system, Lemma \ref{lem:dimension-count} gives
\[
        \frac{F(P)-V(P)}2+2\leq3,
\]
and hence
\begin{equation*}\label{eq:F-minus-V}
        F(P)-V(P)\leq2.
\end{equation*}
Applying the same argument to $P^\circ$, whose numbers of vertices and facets are interchanged by polarity, gives
\begin{equation*}\label{eq:V-minus-F}
        V(P)-F(P)\leq2.
\end{equation*}
Thus
\begin{equation}\label{eq:abs-difference}
        |V(P)-F(P)|\leq2.
\end{equation}
Since $P$ is origin symmetric, $V(P)$ and $F(P)$ are even. Hence only the following three cases may occur.

\smallskip
\noindent\emph{Case 1: $V(P)=F(P)+2$.}
 Then we have
\[
        \frac{F(P^\circ)-V(P^\circ)}2
        =\frac{V(P)-F(P)}2=1.
\]
If some vertex $v\in \mathcal{V}(P)$ had degree $d(v)>3$, then the dual facet $G_v$ of $P^\circ$ would have $m(G_v)=d(v)>3$ vertices by Lemma \ref{lem:polarity-face-lattice}.  Choosing $\theta\in\lin(G_v-G_v)$, we would get
\[
        C_\theta(P^\circ)\geq d(v)-3\geq1.
\]
By Lemma \ref{lem:dimension-count},
\[
        \dim A_\theta^{\rm s}(P^\circ)
        \geq1+2+1=4>3,
\]
contradicting the assumption on $P^\circ$.  Therefore every vertex of $P$ has degree $3$, which implies that
\[
        2E=3V.
\]
Using Euler's formula and $F=V-2$, we have that
 $V=8$ and $F=6$.

An origin symmetric three-dimensional polytope with six facets has three opposite facet pairs, so it must be a parallelepiped after performing an affine transform.

\smallskip
\noindent\emph{Case 2: $F(P)=V(P)+2$.}
Then
\[
V(P^\circ)=F(P)=V(P)+2=F(P^\circ)+2.
\]
Moreover, the assumption is invariant under polarity: neither \(P^\circ\) nor
\((P^\circ)^\circ=P\) admits a non-trivial symmetric admissible shadow system.
Thus Case 1 applied to \(P^\circ\) shows that \(P^\circ\) is a parallelepiped.
Consequently \(P\) is the polar of a parallelepiped, hence an affine image of the
octahedron.

\smallskip
\noindent\emph{Case 3: $V(P)=F(P)$.}
Here Lemma \ref{lem:dimension-count} becomes
\[
        \dim A_\theta^{\rm s}(P)\geq2+C_\theta(P).
\]
We claim that every facet is a triangle or a quadrilateral. Indeed, if $m(G)\geq 5$ for some facet $G$ of $P$, choosing $\theta\in\lin(G-G)$ would give $C_\theta(P)\geq2$, hence a non-trivial speed.

Also, we claim that no two quadrilateral facets share an edge. Since if two quadrilateral facets share an edge, which means they are not opposite facets, choosing $\theta$ parallel to this edge would give $C_\theta(P)\geq2$, again impossible.

Let $p$ be the number of triangular facets and $q$ the number of quadrilateral facets. Since $p+q=F=V$ and $3p+4q=2E,$ using Euler's formula we have that
\[
        p=4,
        \qquad
       q=V-4.
\]
Since no two quadrilateral facets share an edge, every edge of a quadrilateral is adjacent to a triangular facet. By counting the quadrilateral--triangle adjacency edges, we obtain
\[
        4q\leq3p=12.
\]
Since the number $q$ is even, we have that $q\leq2$ and therefore $V=q+4\leq6$. However, an origin-symmetric full-dimensional polytope in $\R^3$ has at least three opposite vertex pairs. Hence $V=6$ and $F=6.$ This can not happen since an origin-symmetric three-dimensional polytope with six vertices must be a affine image of the octahedron and has eight facets, contradicting $F=6$. Thus Case 3 cannot occur.

In conclusion, the only remaining possibilities are the parallelepiped and the affine image of an octahedron.
\end{proof}

\section{Proof of the main result}\label{sec:main-theorem}

We now prove Theorem \ref{thm:main}.

\begin{proof}[Proof of Theorem \ref{thm:main}]
We first prove the result for origin-symmetric polytopes. Fix \(N\geq 6\).
By Lemma~\ref{lem:bounded-class}, there exists \(Q\in\mathcal C_N^{\rm s}\)
minimizing \(\VP\) on \(\mathcal C_N^{\rm s}\). By
Lemmas~\ref{lem:minimizer-exclusion} and \ref{lem:combinatorial}, \(Q\) is a
parallelepiped or an affine  image of the octahedron.

Let \(C=[-1,1]^3\) be the cube. Since \(\VP\) is affine invariant  and \(C^\circ\) is the octahedron, a direct computation gives
\[
        \VP(C)= \VP(C^\circ)=\frac{32}{3}.
\]
Thus the minimum of \(\VP\) on \(\mathcal C_N^{\rm s}\) is \(32/3\). Since
\(N\) was arbitrary, the lower bound holds for every origin-symmetric
three-dimensional polytope.

Now let \(K=-K\subset\R^3\) be an arbitrary origin-symmetric convex body.
Choose a sequence \((P_m)_{m\geq 1}\) of origin-symmetric three-dimensional
polytopes converging to \(K\) in the Hausdorff metric. By the polytope case and
the Hausdorff-continuity of the volume product,
\[
        \VP(K)=\lim_{m\to\infty}\VP(P_m)\geq\frac{32}{3}.
\]
\end{proof}


\begin{thebibliography}{99}
\bibitem{BayerLee1993}
	M.~M. Bayer and C.~W. Lee,
	\emph {Combinatorial aspects of convex polytopes},
	\newblock in Handbook of Convex Geometry, Vols. A--B, P.~M. Gruber and J.~M. Wills, eds.,
	North-Holland, Amsterdam, 1993, pp.~485--534.
    
\bibitem{BourgainMilman1987}
J. Bourgain and V. D. Milman,
\emph{New volume ratio properties for convex symmetric bodies in $\R^n$},
Invent. Math. \textbf{88} (1987), 319--340.
DOI: \href{https://doi.org/10.1007/BF01388911}{10.1007/BF01388911}.

\bibitem{CG}
S. Campi and P. Gronchi,
\emph{On volume product inequalities for convex sets},
Proc. Amer. Math. Soc. \textbf{134} (2006), 2393--2402.

\bibitem{CLXX-Mahler} 
S. Chen, Y. Li, D. Xi, and Z. Xu, The non-symmetric Mahler conjecture in dimension three, 	arXiv:2605.09334 [math.MG].

\bibitem{FHMRZ}
M. Fradelizi, A. Hubard, M. Meyer, E. Rold\'an-Pensado, and A. Zvavitch,
\emph{Equipartitions and Mahler volumes of symmetric convex bodies},
Amer. J. Math. \textbf{144} (2022), 1201--1219.

\bibitem{FMZ2012}
M. Fradelizi, M. Meyer, and A. Zvavitch,
\emph{An application of shadow systems to Mahler's conjecture},
Discrete Comput. Geom. \textbf{48} (2012), 721--734.
DOI: \href{https://doi.org/10.1007/s00454-012-9435-3}{10.1007/s00454-012-9435-3}.

\bibitem{GMR1988}
Y. Gordon, M. Meyer, and S. Reisner,
\emph{Zonoids with minimal volume-product -- a new proof},
Proc. Amer. Math. Soc. \textbf{104} (1988), 273--276.

\bibitem{IS2020}
H. Iriyeh and M. Shibata,
\emph{Symmetric Mahler's conjecture for the volume product in the three-dimensional case},
Duke Math. J. \textbf{169} (2020), no. 6, 1077--1134.
DOI: \href{https://doi.org/10.1215/00127094-2019-0072}{10.1215/00127094-2019-0072}.

\bibitem{Kuperberg2008}
G. Kuperberg,
\emph{From the Mahler conjecture to Gauss linking integrals},
Geom. Funct. Anal. \textbf{18} (2008), no. 3, 870--892.
DOI: \href{https://doi.org/10.1007/s00039-008-0669-4}{10.1007/s00039-008-0669-4}.

\bibitem{Mahler}
K. Mahler, 
\emph{Ein Minimalproblem f{\"u}r konvexe Polygone}, Mathematica B (Zutphen) B7 (1938), 118-127.

\bibitem{Mahler1939}
K. Mahler,
\emph{Ein {\"U}bertragungsprinzip f{\"u}r konvexe K{\"o}rper},
{\v C}asopis pro p{\v e}stov{\'a}n{\'i} matematiky a fysiky
\textbf{68} (1939), 93--102.

\bibitem{MR}
M. Meyer and S. Reisner,
\emph{Shadow systems and volumes of polar convex bodies},
Mathematika \textbf{53} (2006), no. 1, 129--148.
DOI: \href{https://doi.org/10.1112/S0025579300000071}{10.1112/S0025579300000071}.

\bibitem{Reisner1986}
S. Reisner,
\emph{Zonoids with minimal volume-product},
Math. Z. \textbf{192} (1986), 339--346.

\bibitem{RogersShephard1958}
C. A. Rogers and G. C. Shephard,
\emph{Convex bodies associated with a given convex body},
J. London Math. Soc. \textbf{33} (1958), 270--281.

\bibitem{SaintRaymond1981}
J. Saint-Raymond,
\emph{Sur le volume des corps convexes sym\'etriques},
S\'eminaire d'Initiation \`a l'Analyse, 1980--1981, Exp. No. 11, Univ. Paris VI, Paris, 1981.

\bibitem{Schneider}
R. Schneider,
\emph{Convex Bodies: The Brunn--Minkowski Theory},
second expanded edition, Encyclopedia of Mathematics and its Applications, vol. 151,
Cambridge University Press, 2014.
\end{thebibliography}
\end{document}